\newcommand{\R}{\mathbb R}
  \newcommand{\E}{\mathbb E}
\newcommand{\PP}{\mathbb P}
\newcommand{\QQ}{\mathbb Q}
\newcommand{\calX} {\ensuremath {\mathcal{X}}}
\newcommand{\calC} {\ensuremath {\mathcal{C}}}
\newcommand{\calY} {\ensuremath {\mathcal{Y}}}
\newcommand{\calR} {\ensuremath {\mathcal{R}}}
\newcommand{\calF} {\ensuremath {\mathcal{F}}}
\newcommand{\vp}{\varphi}
\newcommand \loc    {\text{loc}}
\newcommand{\dive}{{\rm div}}
\newcommand{\indicator}[1]{\mathbbm{1}_{\left\{ {#1} \right\} }}
\newcommand{\clg}[1]{{\mathcal{#1}}}
\newtheorem{theorem}{Theorem}[section]
 \newtheorem{remark}[theorem]{Remark}
\newtheorem{lemma}[theorem]{Lemma}
\newtheorem{definition}[theorem]{Definition}
\newtheorem{hypothesis}[theorem]{Hypothesis}
\begin{document}
\title{On a class of stochastic transport  equations \\
for $L_{loc}^2$ vector fields}

\author{Ennio Fedrizzi\footnote{Universit\'e de Lyon, CNRS UMR 5208, 
Universit\'e Lyon 1, Institut Camille Jordan, France. 
E-mail: {\sl fedrizzi@math.univ-lyon1.fr}.
},
Wladimir Neves\footnote{Instituto de Matem\'atica, Universidade Federal
do Rio de Janeiro, Brazil. 
E-mail: {\sl wladimir@im.ufrj.br}.
}, 
Christian Olivera\footnote{Departamento de Matem\'{a}tica, Universidade Estadual de Campinas, Brazil. 
E-mail:  {\sl  colivera@ime.unicamp.br}.
}}

\date{}

\maketitle




\noindent \textit{ {\bf Key words and phrases:} 
stochastic partial differential equations, transport equation,  
Cauchy problem, Stochastic Muskat problem.}

\vspace{0.3cm} \noindent {\bf MSC2010 subject classification:} 60H15, 
 35R60, 
 35F10, 
 60H30. 

%
\begin{abstract}
We study in this article the existence and uniqueness of solutions to a class of 
stochastic transport equations with irregular coefficients. Asking only boundedness 
of the divergence of the coefficients (a classical condition in both the deterministic and stochastic setting), 
we can lower the integrability regularity required in known results on the coefficients 
themselves and on the initial condition, and still prove uniqueness of solutions.
 \end{abstract}

\begin{abstract}
{\bf R\'esum\'e:} On \'etudie existence et unicit\'e des solutions de l'\'equation du transport stochastique avec des coefficients tr\`es irr\'eguliers. Si on suppose que la divergence soit born\'e (une condition classique \`a la fois dans le cadre d\'eterministe et stochastique), on peut arriver \`a montrer l'unicit\'e m\^eme avec des coefficients et des donn\'ees seulement localement int\'egrables.
 \end{abstract}

%

\maketitle

%
\section {Introduction} \label{Intro}

The linear transport equation, that is 
\begin{equation}\label{trasports}
    \partial_t u(t, x) +  b(t,x) \cdot  \nabla u(t,x)  = 0 \, ,
\end{equation}
has several and diverse physical applications, for instance related to fluid dynamics
as it is well described in Lions' book \cite{lion1,lion2}. See also Dafermos' book \cite{Dafermos} 
for more general applications of the transport equation in the domain of conservation laws.

\bigskip
In view of applications to multiphase flows through porous media, 
we are interested to study this equation (and in particular the uniqueness property) without Sobolev, or even BV spatial regularity 
of the drift vector field $b(t,x)$. This type of problems is addressed in   
 \cite{NCWN1}, \cite{NCWN2}, \cite{Consta},  \cite{Consta2},  and it is one of the  motivations to consider the vector field $b$ with just $L^2_\loc$ regularity. However, with such low regularity of the coefficient there is no hope to obtain uniqueness results for the above transport equation, due to the counter example provided by M. Aizeman \cite{Aizem}. 
 
Still, hope remains if we consider a stochastic version of the transport equation: we show that with the introduction of a (even very small) random perturbation in the equation it is possible to obtain uniqueness in a suitable, quite general class of solutions. This is the main contribution of this work. 

Our results appear to be well-adapted to the study of the so called Stochastic Muskat Problem, and could constitute a first essential step towards the solution of this important and hard problem. This inaugural type of perturbation of the original Muskat problem may open new research directions, with applications in particular to numerical simulations related to the planning and operation of oil industry.  In the last section we will further discuss these motivations and provide some more details on the Stochastic Muskat Problem.

\bigskip
Let us now briefly recall some of the main recent 
results concerning the transport equation.
In 1989, R. DiPerna and P.L. Lions \cite{DL} proved that $W^{1,1}$ 
spatial regularity of the vector field $b(t,x)$
(together with a condition of boundedness on the divergence) is enough to ensure uniqueness of weak solutions. 
In 1998, P.L. Lions introduced in \cite{lion98}  the so-called piecewise $W^{1,1}$ class and extended the results of \cite{DL} for this type
of regularity. 
Last but not least, in 2004, L. Ambrosio \cite{ambrisio} proved uniqueness for $BV_{loc}$ vector fields. 
It is also worth mentioning the works of M. Hauray \cite{MH}, and G. Alberti, S. Bianchini, G. Crippa 
\cite{ABC} both in 2 dimensions, where the drift 
does not have any differentiability regularity, but with some additional geometrical conditions.
We would also like to mention the generalizations to transport-diffusion
equations and the associated stochastic differential equations by C. Le Bris
and P.L. Lions \cite{LBL04, BrisLion}  and A. Figalli \cite{Figalli}.

\bigskip
Recently, much attention has been devoted to extensions of this theory under 
random perturbations of the drift vector field, namely considering the following stochastic linear transport equation (SLTE)
\begin{equation}\label{trasport}
 \left \{
\begin{aligned}
    &\partial_t u(t, x,\omega) + \big(b(t, x) + \frac{d B_{t}}{dt}(\omega)\big ) \cdot  \nabla u(t, x,\omega) = 0 \, ,
    \\[5pt]
    &u|_{t=0}=  u_{0} \, .
\end{aligned}
\right .
\end{equation}
Here, $(t,x) \in [0,T] \times \R^d$, $\omega \in \Omega$ is an element of the probability space $(\Omega, \PP, \calF)$, $b:\R_+ \times \R^d \to \R^d$ is a given vector field and $B_{t} = (B_{t}^{1},...,B _{t}^{d} )$ is a
standard Brownian motion in $\mathbb{R}^{d}$. The stochastic
integration is to be understood in the Stratonovich sense. 

Most results can be extended to transport equations defined for $(t,x) \in [0,T] \times U$, where the domain $U$ may be the torus $\Pi^d$ or a bounded open (regular) subset of $\R^d$, which is the most interesting case for applications. In the latter case it is assumed that $b$ is tangent to $\partial U$ (in a suitable trace sense), while in the case where the full space is considered ($U=\R^d$), some additional growth conditions are usually required on $b$.

\bigskip
A very interesting situations is when the stochastic problem is better behaved than the deterministic one. A first result in this direction was 
given by F. Flandoli, M. Gubinelli and  E. Priola
in \cite{FGP2}, where they obtained wellposedness of the stochastic problem for an H\"older continuous drift term, with some integrability conditions on the divergence. Their driving motivation was the analysis of the gain in regularity, due to the noisy perturbation, with respect to the deterministic problem. Their approach is based on a careful analysis of the characteristics. Using a similar approach, in \cite{Fre1} a wellposedness result is obtained under only some integrability conditions on the drift, with no assumption on the divergence, but for fairly regular initial conditions. There, it is only assumed that
\begin{equation}\label{LPSC}
    \begin{aligned}
    &b\in L^{q}\big( [0,T] ; L^{p}(\mathbb{R}^{d}) \big) \, , \\[5pt]
 \mathrm{for}  \qquad &\  p,q \in [2,\infty) \, , \qquad   \qquad  \frac{d}{p} + \frac{2}{q} < 1 \, .
\end{aligned}
\end{equation}
In fact, this condition (with local integrability)
was first considered by  Krylov and R\"{o}ckner in \cite{Krylov}, where they 
proved the existence and uniqueness
of strong solutions for the SDE (the equation of characteristics for the SLTE)
\begin{equation}\label{itoass}
X_{s,t}(x)= x + \int_{s}^{t}   b(r, X_{s,r}(x)) \ dr  +  B_{t}-B_{s} \, ,
\end{equation}
such that
$$
 \mathbb{P}\Big( \int_0^T |b(t,X_t)| \ dt= \infty \Big)= 0 \, .
$$
It is interesting to remark that condition \eqref{LPSC} (more precisely with also equality) is known as the Ladyzhenskaya-Prodi-Serrin condition in the fluid dynamics literature.

This approach based on stochastic characteristics proved to be quite efficient to prove existence, uniqueness and regularity of solutions of the stochastic transport equation. It has produced interesting results on strong uniqueness \cite{FGP2, Fre1} of (regular) solutions and weak uniqueness of (less regular) solutions \cite{LBL04}. However, it has some limitations, as one has to be able to solve the equation of characteristics \eqref{itoass}. This can be done working with regularized coefficients, as in do Section 3 below to prove existence of solutions, and then passing to the limit. As mentioned above, the limit equation can be given a meaning when the drift coefficient is in the Krylov-R\"ockner class \eqref{LPSC}, but certainly not in $L^2_{loc}$. Therefore, there is little hope to obtain even weak uniqueness with this approach in the case of $L^2_{loc}$ coefficients: this was already remarked by \cite{LBL04}.

The wellposedness of the Cauchy problem \eqref{trasport} under condition \eqref{LPSC}
for measurable and bounded initial data was considered also in \cite{NO}. In that paper 
the authors are not interested in the regularizing effects on the 
solution due to the noise, since they consider (possibly) discontinuous 
solutions, which are often the relevant ones for physical applications, 
see also \cite{WNCOBD}.

Later, in \cite{Beck}, using a technique based on the regularizing effect 
observed on expected values of moments of the solution, wellposedness of \eqref{trasport} was also obtained for the limit cases of $p,q=\infty$ or when the 
inequality in \eqref{LPSC} becomes an equality.

We mention that other approaches have also been
used to study stochastic linear transport equations. For example, in \cite{Maurelli} the Wiener chaos decomposition  is employed to deal with a weakly differentiable drift, 
or, in \cite{MNP14}, Malliavin calculus, which allows to deal with just a bounded drift term.
However, all these methods seem to have problems in dealing with nonlinear equations, and 
the interesting question of the improvement of the theory due to introduction of noise
for nonlinear equations still remains largely open. The situation is quite delicate: very few results are known, and easy counterexamples can also be constructed. We address the reader to  \cite{Flanlect} for a more detailed discussion of this topic, and only report here the observation that a 
multiplicative noise as the one used in the SLTE is not enough to improve 
the regularity of solutions of the following stochastic Burgers equation
$$
    \partial_t u(t, x,\omega) +  \partial_x u(t, x,\omega)  \big( u(t,x,\omega) + \frac{d B_{t}}{dt}(\omega)\big ) = 0 \, .
$$
Indeed, for this equation one can observe the appearance of shocks in finite time, just as for the deterministic Burgers equation. 
For a different approach related to stochastic scalar conservation laws, we address the reader to \cite{lions}. \\

The main issue of this paper it to prove uniqueness of weak solutions for $L_{loc}^{2}$ vector fields
(an intrinsically stochastic result as mentioned before) for measurable bounded initial data. Since we are not using characteristics to prove uniqueness, the integrability hypothesis we need on the vector field $b$ are less restrictive than \eqref{LPSC}. However, we ask that the divergence of $b$ be bounded and some regularity in mean for the solution of the stochastic transport equation. 
We stress that the approach presented here, though inspired by the above mentioned works, remains quite different as our proof of the uniqueness property relies on properties of the stochastic exponentials. This seems to be the first time stochastic exponentials are used to prove uniqueness for a SPDE.

It is well known that the expected value $U=\E[u]$ of any solution $u$ of the stochastic transport equation \eqref{trasport} solves a parabolic equation (sometimes called viscous transport equation)
\begin{equation}\label{PDE-intro}
\partial_t U(t,x) +  b(t,x) \cdot \nabla U(t,x)= \frac{1}{2} \Delta U(t,x) \, ,
\end{equation}
which enjoys very good regularization and uniqueness properties: this is ultimately due to the 
passage from the Stratonovich stochastic integral to the It\^o formulation, and was used for example in \cite{Beck}. It is therefore possible to obtain uniqueness in law even with irregular coefficients, see for example the result of \cite{LBL04}, where weak uniqueness is obtained in the same setting we will use.

One way to obtain strong uniqueness using this link with a parabolic PDE is to consider renormalized solutions $\beta(u)$, as introduced by Di Perna and Lions \cite{DL}. This requires however at least BV regularity of the drift coefficient $b$, plus bounded divergence and linear growth. See \cite{ambrisio} for the deterministic case, \cite{AttFl11} for the stochastic case. The interesting result contained in the latter is that in the stochastic setting one can relax the condition on bounded divergence to allow for a drift with a component having bounded divergence and linear growth, plus a bounded component.

Contrary to the above examples, to obtain a stronger form of uniqueness with our approach we are brought to consider a {\it family} of parabolic equations
\begin{equation}\label{para}
\partial_t V(t,x) + \big( b(t,x) + h(t) \big) \cdot \nabla V(t,x)= \frac{1}{2} \Delta V(t,x)
\end{equation}
with $h(t)\in L^2(0,T)$. These equations are similar to the Fokker-Planck equation \eqref{PDE-intro} studied for example in \cite{LBL04} and \cite{BrisLion}. In particular, their results provide wellposedness for the family of parabolic equations \eqref{para} in the space $L^\infty([0,T] ; L^1 \cap L^\infty(\R^d)) \cap L^2([0,T] ; H^1(\R^d))$ (see \cite[Proposition 5.4]{LBL04}). Here, we will work with a very similar space, $\calC^0(L^2) \cap L^2( H^1) $, see Definition \ref{defisolu}.  We will show (see Lemma \ref{lem:eq V} ) that for a solution $u$ to the stochastic transport equation \eqref{trasport}, its expected value $V=\E[uF]$ against any stochastic exponential $F$ solves, as soon as it is sufficiently regular, a parabolic equation of the family \eqref{para}, and therefore, as one could then expect, is unique.  Using the uniqueness result not only for a single equation but for the whole family \eqref{para}, and looking at stochastic exponentials as test functions (they form a family which is large enough), we are able to obtain almost sure uniqueness.


We also stress that our uniqueness result is established in the class of quasiregular weak solutions
(see Definition \ref{defisolu}); this class encompass the natural one, containing 
solutions obtained by regularization processes, see Remark \ref{REMAS}.  
Uniqueness in this class could be used to apply a fixed-point argument and show existence of solutions to
the Stochastic Muskat Problem, see the discussion in Section \ref{SMP}.

This paper is organized as follows. In the next section we present our setting, introduce some notation and define the class of quasiregular weak solutions. In Section \ref{EXISTENCE} we prove existence of such solutions. The main result, uniqueness in the class considered, is contained in Section \ref{UNIQUE}. In section \ref{SMP} we present the Stochastic Muskat problem, one of the motivations that drove us to consider this problem. To ease the presentation, the proofs of some technical results are postponed to the Appendix.

\section{Definition of  weak solutions}

We present now the setting and a suitable definition of weak solutions to equation \eqref{trasport}, adapted to treat the problem of well-posedness 
under our very weak assumptions on the regularity of the coefficients and the initial condition. On the drift coefficient $b$ we shall only assume local integrability and a mild growth control condition. 
Its divergence is assumed to be bounded in space and  integrable in time.

\begin{hypothesis}\label{hyp} 
We shall always assume that the vector field $b$ satisfies:
\begin{equation}\label{con1}
   b \in L^2_\loc\big( [0,T] \times \R^d  \big) 
\end{equation} 
and 
\begin{equation}\label{con2}
 \dive  b(t,x) \in L^1 \big( [0,T] ; L^\infty ( \R^d) \big) \, . 
\end{equation}
Moreover, the initial condition is taken to be
\begin{equation*}\label{conIC}
 u_0 \in  L^2(\R^d)  \cap L^\infty (\R^d) \, . 
\end{equation*}
\end{hypothesis}

This first set of hypothesis is sufficient to prove existence of solutions.
However, as in the classical deterministic setting, to obtain 
uniqueness an additional hypothesis on the growth of the 
drift coefficient is needed. 

\begin{hypothesis} \label{hyp2}
Assume that the vector field $b$ satisfies:
\begin{equation}\label{con3}
 There \ exists \ R> 0, \ such \ that \   \frac{b(t,x)}{1+|x|} \in L^1\big([0,T]; L^\infty(\R^d-B_R) \big),\\
\end{equation}
where \ $B_R= \left\{  x \in \R^{d} : \ |x|\leq R \right\}$.
\end{hypothesis}


\medskip
We shall work on a fixed time interval $t\in[0,T]$, and throughout the paper 
we will use a given probability space $(\Omega, \PP, \calF)$, on which there exists 
an $\R^d$-valued Brownian motion $B_t$ for $t\in[0,T]$. We will use the 
natural filtration of the Brownian motion $\calF_t = \calF_t^B$, 
and restrict ourselves to consider the collection of measurable sets given by the $\sigma$-algebra $\calF = \calF_T$, augmented by the $\PP$-negligible sets. 
Moreover, for convenience 
we introduce the following set of random variables, called the space of stochastic exponentials:
$$ 
  \calX := \Big\{ F= \exp \Big( \int_0^T h(s) \cdot d B_s - \frac{1}{2}\int_0^T |h(s)|^2 \, ds \Big)   \    \Big|     \    h \in L^2\big([0,T] ;\R^d \big) \Big\} \, .
$$
Further details on stochastic exponentials and some useful properties are collected in the Appendix.
In particular,
the technical assumption that the $\sigma$-algebra we are using is 
the one provided by the Brownian motion is essential to ensure that 
the family of stochastic exponentials provides a set of test functions 
large enough to obtain almost sure uniqueness.

\medskip
The next definition tells us in which sense a stochastic process is a weak solution of \eqref{trasport}. 
Hereupon, we will use the summation convention on repeated indices.

\begin{definition}\label{defisolu}  A stochastic process $u\in L^2 \cap L^\infty \big( \Omega \times[0,T] \times \R^d \big)$ is called 
a quasiregular weak solution of the Cauchy problem \eqref{trasport} when:

\begin{itemize}
\item ({\it Weak solution}) For any $\varphi \in C_c^{\infty}(\R^d)$, the real valued process $\int  u(t,
  x)\varphi(x)  dx$ has a continuous modification which is an
$\mathcal{F}_{t}$-semimartingale, and for all $t \in [0,T]$, we have $\mathbb{P}$-almost surely
\begin{equation} \label{DISTINTSTR}
\begin{aligned}
    \int_{\R^d} u(t,x) \varphi(x) dx = &\int_{\R^d} u_{0}(x) \varphi(x) \ dx
   +\int_{0}^{t} \!\! \int_{\R^d} u(s,x) \ b^i(s,x) \partial_{i} \varphi(x) \ dx ds
\\[5pt]
    &     +   \int_{0}^{t} \!\! \int_{\R^d} \dive b(s,x) u(s,x) \varphi(x) \ dx \, ds 
\\[5pt]    
    &+ \int_{0}^{t} \!\! \int_{\R^d} u(s,x) \ \partial_{i} \varphi(x) \ dx \, {\circ}{dB^i_s} \, .
\end{aligned}
\end{equation}

\item ({\it Regularity in Mean}) For each function $F \in \calX$, the deterministic function $V:=\mathbb{E}[uF]$ is a measurable bounded function, 
which belongs to  $L^{2}([0,T]; H^{1}(\R^d) ) \cap C([0,T]; L^2(\R^d))$.
\end{itemize}
\end{definition}

If the Stratonovich formulation, in view of the Wong-Zakai approximation theorem, is often considered to be the ``natural'' one for this kind of problems, it is useful for computations to present also the Ito formulation of equation \eqref{DISTINTSTR}. It reads 
\begin{align}\label{Ito-weak}
    \int_{\R^d} u(t,x) \varphi(x) \, dx =  &  \int_{\R^d} u_0(x) \varphi(x) \, dx    \\
    & + \int_{0}^{t} \!\! \int_{\R^d} u(s,x) \, \big( b(s,x) \cdot \nabla \varphi(x) + \varphi(x)   \dive b(s,x) \big) \, dx ds     \nonumber   \\
    & + \int_{0}^{t}  \Big( \int_{\R^d} u(s,x) \ \nabla \varphi(x) \ dx \Big) \cdot dB_s  \nonumber  \\
    & + \frac{1}{2}  \int_{0}^{t} \!\! \int_{\R^d} u(s,x) \ \Delta \varphi(x) \, dx ds \, .   \nonumber
\end{align}

\begin{remark}\label{rem:filtration}
Let us stress that the notion of solution in the above definition is ``strong" in probabilistic sense, since the Brownian motion is a priori given. However, quasiregular solutions are processes which, integrated against smooth test functions in space, are semimartingales only with respect to the \it{Brownian filtration}, not an arbitrarily chosen filtration.
\end{remark}

\begin{remark}\label{rem:regularity b}
The condition of regularity in mean in the above definition is introduced to replace some stronger regularity assumptions on $b$, usually necessary to prove uniqueness with more traditional approaches. Remark however that, as soon as the drift coefficient is a little bit more regular, regularity in mean is no longer necessary: if \eqref{con1} is replaced by
\begin{equation*}
b \in L^1_\loc\big( [0,T] ; W^{1,1}_{loc} ( \R^d)  \big) \, ,
\end{equation*}
then by \cite{LBL04} we have weak uniqueness for the stochastic transport equation \eqref{trasport}. Moreover, under the same regularity condition, it was recently shown \cite{CO13} existence and strong (in probabilistic sense) uniqueness of $L^p$ weak (in PDE sense) solutions. Observe that the $(\omega,t,x)$-a.e. uniqueness obtained in the present work is implied by strong uniqueness.
\end{remark}

\begin{lemma}\label{lem:eq V}
If $u$ is a quasiregular weak solution of \eqref{trasport}, then for each function $F \in \calX$, the deterministic function $V:=\mathbb{E}[uF]$ satisfies  the parabolic equation \eqref{para} in the weak sense, with initial condition given by $V_0 = u_0$.
\end{lemma}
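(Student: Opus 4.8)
The plan is to pass from the It\^o form \eqref{Ito-weak} of the equation for $u$ to the weak formulation of \eqref{para} for $V$, by pairing the solution with the Dol\'eans--Dade martingale that generates $F$ and then applying the It\^o product rule. Fix $F\in\calX$ with associated $h\in L^2([0,T];\R^d)$, and introduce the martingale
\[
F_t := \exp\Big(\int_0^t h(s)\cdot dB_s - \tfrac12\int_0^t|h(s)|^2\,ds\Big),
\]
so that $F=F_T$, $dF_t = F_t\,h(t)\cdot dB_t$ and $F_0=1$. Since $\int_0^T|h|^2\,ds$ is a finite deterministic constant, Novikov's criterion holds trivially and $F_t$ is a genuine martingale; a direct computation moreover gives $\sup_{t}\E[F_t^p]<\infty$ for every $p$, a uniform moment bound I will use repeatedly in the integrability justifications.

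First I would fix $\varphi\in C_c^\infty(\R^d)$ and set $X_t:=\int_{\R^d}u(t,x)\varphi(x)\,dx$, which by Definition \ref{defisolu} is a continuous $\calF_t$-semimartingale whose It\^o dynamics are read off from \eqref{Ito-weak}. The key preliminary identity is
\[
\int_{\R^d}V(t,x)\varphi(x)\,dx = \E[F_t\,X_t],
\]
obtained by writing $\int V\varphi\,dx = \E[F_T X_t]$ (Fubini) and then conditioning on $\calF_t$: since $X_t$ is $\calF_t$-measurable and $\E[F_T\mid\calF_t]=F_t$, the tower property yields the claim. This is precisely where the choice of the Brownian filtration and the martingale property of $F_t$ are essential.

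Next I would apply the It\^o product rule to $F_t X_t$. The term $X_t\,dF_t$ and the It\^o integral part of $F_t\,dX_t$ are local martingales, while the cross-variation equals $F_t\,h(t)\cdot\big(\int u\,\nabla\varphi\,dx\big)\,dt$, and this is exactly the term that produces the additional drift $h\cdot\nabla V$ in \eqref{para}. Taking expectations annihilates the martingale contributions, and after a Fubini interchange to move $\E$ inside the spatial integrals and identify $V=\E[uF_t]$ pointwise in $x$, one arrives at
\[
\frac{d}{dt}\int V\varphi\,dx = \int V\,b\cdot\nabla\varphi\,dx + \int V\,\varphi\,\dive b\,dx + h(t)\cdot\int V\,\nabla\varphi\,dx + \tfrac12\int V\,\Delta\varphi\,dx,
\]
which is exactly the weak form of \eqref{para} after integrating by parts the transport and Laplacian terms. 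The initial condition is immediate: $V(0,\cdot)=\E[u_0F_0]=\E[u_0]=u_0$, since $u_0$ is deterministic and $F_0=1$.

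I expect the main obstacle to be the integrability bookkeeping rather than the algebra: one must verify that the stochastic integrals appearing after the product rule are true martingales, so that their expectations vanish, and that Fubini legitimately interchanges $\E$ with the spatial integration. Both rest on the same three ingredients --- $u\in L^2\cap L^\infty(\Omega\times[0,T]\times\R^d)$, the compact support and boundedness of $\varphi$ and its derivatives, and the uniform moment bounds $\sup_t\E[F_t^p]<\infty$ --- which together place all the relevant integrands in $L^1$ or $L^2$ of the product space. Finally, the regularity-in-mean clause of Definition \ref{defisolu} is what guarantees that the resulting $V$ lies in $L^2([0,T];H^1(\R^d))\cap C([0,T];L^2(\R^d))$, so that the identity above is genuinely a weak solution of \eqref{para} in the intended class and not merely a formal relation.
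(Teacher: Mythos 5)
Your proposal is correct and follows essentially the same route as the paper: pass to the It\^o form \eqref{Ito-weak}, pair with the stochastic exponential, and convert the expectation of the stochastic integral into the drift term $h(t)\cdot\nabla V$ using the dynamics $dF_t=F_t\,h(t)\cdot dB_t$ together with the tower property $\E[F_T\mid\calF_t]=F_t$. The only difference is presentational: where you apply the It\^o product rule to $F_tX_t$ and read off the cross-variation, the paper multiplies the integrated equation by $F=F_T$ and invokes Lemma \ref{lemma B-F}, whose proof is exactly the covariation computation you perform.
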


\begin{proof}
Take any $F\in \calX $ and any quasiregular weak solution $u$. By definition, $V(t,x) \in L^{2} \big( [0,T]; H^{1}(\R^d) \big) \cap C \big( [0,T] ; L^2 (\R^d) \big)$. Consider the It\^o integral form of the equation satisfied by $u$, as given in \eqref{Ito-weak}. To obtain an equation for $V$ we multiply this equation by $F$ and take expectations:
\begin{align}\label{eq1}
    \int_{\R^d} V(t,x) \varphi(x) \, dx = & \int_{\R^d} V(0,x) \varphi(x) \, dx \nonumber \\
    & + \int_{0}^{t} \!\! \int_{\R^d} V(s,x) \, \big( b(s,x) \cdot \nabla \varphi(x) + \varphi(x)   \dive b(s,x) \big) \, dx ds      \nonumber  \\[5pt]
    & + \E \Big[  \int_{0}^{t}  \Big( \int_{\R^d} u(s,x) \ \nabla \varphi(x) \ dx \Big) \cdot dB_s  \ F \Big] \\[5pt]  
    & + \frac{1}{2}  \int_{0}^{t} \!\! \int_{\R^d} V(s,x) \ \Delta \varphi(x) \, dx ds \, .   \nonumber
\end{align} 
By definition of quasiregular weak solutions, $\int_{\R^d} u(\cdot,x) \varphi(x) \, dx$ is an adapted square integrable process for any $\varphi\in C^\infty_c(\R^d)$. Therefore,
$$Y_s = \int_{\R^d} u(s,x) \nabla \varphi(x) \, dx$$
is also an adapted square integrable process. The expected value of the stochastic integral on the second line of \eqref{eq1} can be rewritten as the expected value of a Lebesgue integral against a certain function $h \in L^2\big([0,T] \big)$ due to the properties of stochastic exponentials:
\begin{align*}
 \E \Big[  \int_{0}^{t}  \Big( \int_{\R^d} u(s,x) \ \nabla \varphi(x) \ dx \Big) \cdot dB_s  \ F \Big] =  \int_{0}^{t} \int_{\R^d} \!  V(s,x) h(s) \cdot \nabla \varphi(x) \, dx ds   \, .
\end{align*}
 This is shown in detail in Lemma \ref{lemma B-F} in the Appendix.
\medskip

Now, due to the regularity of $V$, we see that $V$ is a weak solution of the PDE \eqref{para}, that is to say, for each test function $\vp \in C^\infty_c(\R^d)$ 
\begin{align}\label{eq V}
    \int_{\R^d} V(t,x)  \varphi (x) \, dx =  & \int_{\R^d} V(0,x)  \varphi (x) \, dx   \nonumber  \\
     &+  \int_{0}^{t} \int_{\R^d} \!  V(s,x) \big(   b(s,x) \cdot  \nabla \varphi (x) +   \varphi (x) \dive b(s,x)   \big)  \, dx  ds     \nonumber    \\[5pt]
     &+  \int_{0}^{t} \int_{\R^d} \!  V(s,x) h(s) \cdot \nabla \varphi(x) \, dx ds     \nonumber    \\[5pt]
     & - \frac{1}{2}\int_{0}^{t}   \int_{\R^d} \!   \nabla V(s,x)  \cdot \nabla \varphi (x) \, dx  ds  \, .
\end{align}

As explained in the Appendix, due to the properties of stochastic exponentials we have that $F$ is a martingale with mean $1$. Since $u_0$ is deterministic, it immediately follows that $V_0 = \E \big[ u_0 F \big] = u_0$.
\end{proof}

\begin{remark}\label{rem:Girsanov}
Let us spend a few words to discuss one of the reasons for the introduction of stochastic exponentials in the above Definition \ref{defisolu}. Even though we only use a special class of stochastic exponentials ($h$ is deterministic), their use may recall the classical Girsanov's Theorem. 
Indeed, if $h$ is cadlag, by Girsanov's Theorem the expected value $V= \E[uF]$ is the same as the expected value of the process $u$ under a new probability measure $\QQ$,
which has density $F_t$ (see Definition \ref{def Ft}) with respect to the reference probability measure $\PP$:
\begin{equation*}
\frac{d \QQ}{d \PP} \big|_{\calF_t} = F_t \, . 
\end{equation*}
This is because
\begin{equation*}
 \E^\PP \big[ u(t,x) F \big] =  \E^\PP \big[ u(t,x) F_t \big] = \E^\QQ [ u ]  \, .
\end{equation*}
From this point of view, one could interpret our approach 
to the uniqueness problem as follows: 
we show that the expected value of our solution $u$ is unique under a family of probability measures, 
which is large enough to ensure uniqueness of the solution (almost surely).
\end{remark}

\section{ Existence of weak solutions}
\label{EXISTENCE}

We shall here prove existence of quasiregular solutions, under hypothesis \ref{hyp}. The key hypothesis is \eqref{con2}, which allows to obtain a-priori estimates. The existence of weak solutions is then classical (see for example \cite{FGP2} or the discussion in \cite{BrisLion}), but we still have to check the regularity in mean of such solutions.

\begin{theorem}\label{lemmaexis1}  Under the conditions of Hypothesis \ref{hyp}, there exit quasiregular weak solutions $u$ of the Cauchy problem \eqref{trasport}.
\end{theorem}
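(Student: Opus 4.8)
The plan is to obtain $u$ as a limit of solutions to regularized problems and then verify the two requirements of Definition \ref{defisolu} separately. Following the remark preceding the statement, the genuinely new content is the regularity in mean, the existence of a weak solution being essentially classical, so I will spend most effort on the former.

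First I would mollify the drift in space, setting $b^n = b * \rho_n$ for a standard mollifier $\rho_n$. By Young's inequality $\dive b^n = (\dive b)*\rho_n$ satisfies $\|\dive b^n\|_{L^1([0,T];L^\infty)} \le \|\dive b\|_{L^1([0,T];L^\infty)}$, so the crucial bound \eqref{con2} is preserved uniformly in $n$, while $b^n \to b$ and $\dive b^n \to \dive b$ strongly in $L^2_\loc([0,T]\times\R^d)$. For each smooth $b^n$ the characteristic SDE \eqref{itoass} generates a stochastic flow of diffeomorphisms $\phi^n_t$, and $u^n(t,x) := u_0\big((\phi^n_t)^{-1}(x)\big)$ is the natural candidate solution; the It\^o--Stratonovich calculus shows that $u^n$ satisfies \eqref{DISTINTSTR} with $b$ replaced by $b^n$. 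The transport structure gives the maximum principle $\|u^n(t)\|_{L^\infty} \le \|u_0\|_{L^\infty}$, and multiplying by $u^n$ and using \eqref{con2} yields, after Gronwall, $\|u^n(t)\|_{L^2}^2 \le \|u_0\|_{L^2}^2 \exp\big(\int_0^T \|\dive b^n(s)\|_{L^\infty}\,ds\big)$. Both bounds are uniform in $n$ and $\omega$, so $(u^n)$ is bounded in $L^2\cap L^\infty(\Omega\times[0,T]\times\R^d)$.

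Next I extract a subsequence with $u^n \rightharpoonup u$ weakly-$*$ in $L^\infty$ (hence weakly in $L^2_\loc$) and pass to the limit in \eqref{Ito-weak}. The terms linear in $u^n$ converge immediately; in the drift term $\int u^n\, b^n\cdot\nabla\varphi$ one combines the weak convergence of $u^n$ with the strong $L^2_\loc$ convergence of $b^n$ (licit since $\varphi$ is compactly supported), and likewise for $\int \dive b^n\, u^n\varphi$, while the second-order term is linear. For the It\^o integral $\int_0^t\!\big(\int u^n\nabla\varphi\,dx\big)\cdot dB_s$ I multiply the identity by a bounded $\calF_t$-measurable $G$ and take expectations; writing $G = \E[G] + \int_0^T Z_s\cdot dB_s$ via the martingale representation theorem (available precisely because the filtration is Brownian), the It\^o isometry turns $\E\big[G\int_0^t Y^n_s\cdot dB_s\big]$ into $\E\big[\int_0^t Z_s\cdot Y^n_s\,ds\big]$ with $Y^n_s=\int u^n\nabla\varphi\,dx$, a bounded linear functional of $Y^n$, hence weakly continuous. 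This identifies the limit, and the limiting identity exhibits $\int u(t)\varphi$ as a continuous bounded-variation process plus an It\^o integral, i.e. a continuous $\calF_t$-semimartingale, so $u$ is a weak solution.

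It remains to check the regularity in mean, which I expect to be the main difficulty. Fix $F\in\calX$ with associated $h$ and set $V^n := \E[u^n F]$; the computation of Lemma \ref{lem:eq V} applied to the smooth problem shows $V^n$ solves \eqref{para} (with $b^n$) classically, with $V^n_0=u_0$. The decisive point is the energy estimate: multiplying by $V^n$ and integrating by parts, the $h$-contribution drops because $\dive h = 0$ and the $b^n$-contribution reduces to $\tfrac12\int \dive b^n\,(V^n)^2$, giving $\tfrac{d}{dt}\|V^n\|_{L^2}^2 + \|\nabla V^n\|_{L^2}^2 \le \|\dive b^n\|_{L^\infty}\|V^n\|_{L^2}^2$; with \eqref{con2} and Gronwall this bounds $V^n$ in $L^\infty([0,T];L^2)\cap L^2([0,T];H^1)$ uniformly in $n$ and, notably, in $F$, while $\|V^n(t)\|_{L^\infty}\le\|u_0\|_{L^\infty}\,\E[F]=\|u_0\|_{L^\infty}$. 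Since $u^n\rightharpoonup u$ and $F$ is fixed, $V^n\rightharpoonup V=\E[uF]$ weakly in $L^2([0,T]\times\R^d)$, so by weak lower semicontinuity $V$ inherits all these bounds. For the continuity $V\in C([0,T];L^2)$ I would rewrite the drift in divergence form, $(b+h)\cdot\nabla V = \dive\big((b+h)V\big) - \dive b\,V$, to read off from \eqref{para} that $V$ is a weak $L^2(H^1)$-solution of the parabolic problem, whence its continuous-in-$L^2$ representative follows from the parabolic wellposedness theory of \cite{LBL04, BrisLion} recalled in the introduction. The delicate points throughout are the passage to the limit in the drift under only $L^2_\loc$ regularity of $b$ and the uniform $H^1$ estimate on $V^n$, both of which are exactly what force the bounded-divergence hypothesis \eqref{con2}.
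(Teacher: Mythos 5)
Your proposal follows essentially the same route as the paper: mollify the drift, solve the regularized problem via Kunita's theory of stochastic flows and the representation formula $u^n=u_0\circ(\phi^n_t)^{-1}$, extract uniform $L^2\cap L^\infty$ bounds from the bounded--divergence hypothesis, pass to the weak limit in the weak formulation, and then run the same energy estimate on $V^n=\E[u^nF]$ (with the $h$--term dropping because $h$ depends only on $t$) to get the $L^\infty(L^2)\cap L^2(H^1)$ bounds uniformly in $n$. Two of your variations are benign alternatives: for the stochastic integral you use duality against bounded $\calF_T$--measurable $G$ plus martingale representation, where the paper simply notes that $f\mapsto\int_0^\cdot f\cdot dB_s$ is linear and strongly, hence weakly, continuous on nonanticipative $L^2$ processes; and for the $C([0,T];L^2)$ continuity of $V$ you invoke parabolic theory, where the paper passes to the limit in $\calC([0,T];L^2)$. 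In either treatment of the It\^o term you must also record that the limit $u(\varphi)$ is adapted (the paper does this by observing that the subspace of nonanticipative processes is strongly closed, hence weakly closed, in $L^2(\Omega\times[0,T])$); without that, the stochastic integral in the limiting identity is not even defined.

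The one step that would fail as literally written is the very first one. For $b\in L^2_\loc([0,T]\times\R^d)$ the mollification $b*\rho_n$ is smooth in $x$ but in general unbounded at infinity with unbounded derivatives (its size at $x$ is controlled only by the local $L^2$ norm of $b$ near $x$), so the characteristic SDE $dX_t=b^n(t,X_t)\,dt+dB_t$ need not generate a global stochastic flow of diffeomorphisms: Kunita's theory requires global Lipschitz or linear--growth control, and solutions could explode in finite time. This is precisely why the paper multiplies the mollified drift by a cutoff $\eta_\varepsilon$, making $b^\varepsilon$ compactly supported with bounded derivatives of all orders before invoking Kunita; the cutoff does not disturb the uniform bound on $\dive b^\varepsilon$ nor the strong $L^2_\loc$ convergence, so the rest of your argument goes through unchanged once this is inserted. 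A smaller point of the same flavour: your $L^2$ bound obtained by ``multiplying the equation by $u^n$ and applying Gronwall'' is delicate for a Stratonovich SPDE; the paper derives it pathwise from the representation formula together with the ODE satisfied by the Jacobian $J\phi^n_t$, which is the cleaner way to see that the constant is uniform in $\omega$ and $n$.
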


\begin{proof} We divide the proof into two steps. First, using an approximation procedure we shall
 prove that the problem \eqref{trasport} admits weak solutions under our hypothesis. Then, in the second step, we will show that the solutions obtained as limit of regularized problems in the first step are
   indeed quasiregular solutions.
\bigskip

{\it Step 1: Existence.} Let $\{\rho_\varepsilon\}_\varepsilon$ be a family of standard symmetric mollifiers.
 Consider a  nonnegative smooth cut-off function $\eta$ supported on the ball of radius 2 and such that
 $\eta=1$ on the ball of radius 1. For every $\varepsilon>0$ introduce the rescaled functions
 $\eta_\varepsilon (\cdot) =  \eta(\varepsilon \cdot)$. Using these two families of functions we define the family of regularized coefficients as
 $b^{\epsilon}(t,x) = \eta_\varepsilon(x) \big( [ b(t,\cdot) \ast \rho_\varepsilon (\cdot) ] (x) \big) $.
  Similarly, define the family of regular approximations of the initial condition $u_0^\varepsilon (x) = 
   \eta_\varepsilon(x) \big( [ u_0(\cdot) \ast \rho_\varepsilon (\cdot) ] (x)  \big) $.

Remark that any element $b^{\varepsilon}$, $u_0^\varepsilon$, $\varepsilon>0$ of the two families
 we have defined is smooth (in space) and compactly supported, therefore with bounded
 derivatives of all orders. Then, for any fixed $\varepsilon>0$, the classical theory of Kunita, see \cite{Ku} or \cite{Ku3},
 provides the existence of a unique 
 solution $u^{\varepsilon}$ to the regularized equation 
\begin{equation}\label{trasport-reg}
 \left \{
\begin{aligned}
    &d u^\varepsilon (t, x,\omega) +  \nabla u^\varepsilon (t, x,\omega)  \cdot \big( b^\varepsilon (t, x)  dt +
 \circ d B_{t}(\omega) \big) = 0 \, ,
    \\[5pt]
    &u^\varepsilon \big|_{t=0}=  u_{0}^\varepsilon
\end{aligned}
\right .
\end{equation}
together with the representation formula
\begin{equation}\label{repr formula}
u^\varepsilon (t,x) = u_0^\varepsilon \big( (\phi_t^\varepsilon)^{-1} (x) \big)
\end{equation}
in terms of the (regularized) initial condition and the inverse flow $(\phi_t^\varepsilon)^{-1}$ associated to the equation of characteristics of \eqref{trasport-reg}, which reads 
\begin{equation*}
d X_t = b^\varepsilon (t, X_t) \, dt + d B_t \, ,  \hspace{1cm}   X_0 = x \,.
\end{equation*}
Moreover, the Jacobian of the flow solves pathwise the deterministic ODE (see \cite{Ku})
\begin{equation*}
d J \phi_t^\varepsilon (x,\omega) = \dive b^\varepsilon \big(  t, \phi_t^\varepsilon (x,\omega)  \big)  J \phi_t^\varepsilon (x,\omega) \, dt
\end{equation*}
and thus
\begin{equation*}
\log \big( J \phi_t^\varepsilon (x,\omega) \big) = \int_0^t \dive b^\varepsilon \big( s, \phi_s^\varepsilon (x,\omega)  \big) \,  ds \, .
\end{equation*}
Due to assumption \eqref{con2}, the Jacobian of the flow is therefore bounded uniformly in $\varepsilon$,
 because $\int_0^t \dive b^\varepsilon ds$ is. Then, we can use the random change of variables $(\phi_t^\varepsilon)^{-1}(x)  \mapsto x$ to obtain that almost surely
\begin{align}   \label{u_eps uniform}
\int_{\R^d}  \big|  u^{\epsilon}(t,x) \big|^{2}  dx &=  \int_{\R^d}  \big| u_{0}^{\epsilon}\big( (\phi_{t}^\varepsilon)^{-1} (x,\omega)\big) 
  \big|^{2}  dx = \int_{\R^d}  \big|u_{0}^{\epsilon}(x) \big|^2   J \phi_{t}^\varepsilon (x,\omega) \, dx    \nonumber  \\
  &\leq C \int_{\R^d}     \big|u_{0}^{\epsilon}(x)   \big|^{2}  dx \, .
\end{align}
\medskip

If $u^\varepsilon$ is a solution of \eqref{trasport-reg}, it is also a weak solution, which means that for any test function $\varphi \in C_c^\infty(\R^d)$, $u^\varepsilon$ satisfies 
the following equation (written in It\^o form)
\begin{equation} \label{transintegralR2}
\begin{aligned}
    \int_{\R^d} u^\varepsilon(t,x) &\varphi(x) \, dx= \int_{\R^d} u^\varepsilon_{0}(x) \varphi(x) \, dx
   +\int_{0}^{t} \!\! \int_{\R^d} u^\varepsilon(s,x) \, b^\varepsilon(s,x) \cdot \nabla \varphi(x) \, dx ds
\\[5pt]
   &+ \int_{0}^{t} \!\! \int_{\R^d} u^\varepsilon(s,x)\,  \dive \, b^\varepsilon(s,x) \, \varphi(x) \, dx ds 
\\[5pt]
    &+ \int_{0}^{t} \!\! \int_{\R^d} u^\varepsilon(s,x) \, \partial_{i} \varphi(x) \, dx \, dB^i_s
    + \frac{1}{2} \int_{0}^{t} \!\!\int_{\R^d} u^\varepsilon(s,x) \Delta \varphi(x) \, dx ds \, .
\end{aligned}
\end{equation}
To prove the existence of weak solutions to \eqref{trasport} we shall show that the sequence $u^\varepsilon$ admits a convergent subsequence, and pass to the limit in the above equation along this subsequence. This is done following the classical argument of \cite[Sect. II, Chapter 3]{Pardoux}, see also \cite[Theorem 15]{FGP2}.

\medskip
Let us denote by $\calY$ the separable metric space $C([0,T];  L^{2}(\R^d))$. Since $u_0^\varepsilon$
 is uniformly bounded in $L^2(\R^d)$, by \eqref{u_eps uniform}  $u^\varepsilon$ is also uniformly bounded in the spaces $L^{\infty}\big(\Omega; \calY \big)$ and $L^2 \big( \Omega \times [0,T] \times \R^d \big)$. 
 By the representation formula \eqref{repr formula} itself, we also get the uniform bound in 
$L^\infty  \big( \Omega \times [0,T] \times \R^d \big)$. Therefore, there exists a sequence 
$\varepsilon_n \to 0$ such that $u^{\varepsilon_n}$ weak-$\star$ converges in $L^\infty$ 
and weakly in $L^2$ to some process 
$u\in L^2  \big( \Omega \times [0,T] \times \R^d \big) \cap L^\infty  \big( \Omega \times [0,T] \times \R^d \big) $. To ease notation, let us denote $\varepsilon_n$ by $\varepsilon$ and for every $\varphi\in  C_c^{\infty}(\R^d)$, $\int_{\R^d} u^\varepsilon (t,x) \varphi(x) \, dx $ by $u^\varepsilon(\varphi)$, including the case $\varepsilon=0$.

\medskip
Clearly, along the convergent subsequence found above, the sequence of nonanticipative processes $u^{\varepsilon}(\varphi)$ also weakly converges in $L^2 \big( \Omega \times [0,T])$ to the process $u( \varphi)$, which is progressively measurable because the space of non\-anticipative processes is a closed subspace of $L^2 \big( \Omega \times [0,T])$, hence weakly closed. It follows that the It\^o integral of the bounded process $u(\varphi)$ is well defined. Moreover, the mapping $f\mapsto \int_0^\cdot f(s) \cdot dB_s$ is linear continuous from the space of nonanticipative $L^2(\Omega \times[0,T] ; \R^d)$-processes to $L^2(\Omega \times [0,T])$, hence weakly continuous. Therefore, the It\^o term $\int_0^\cdot u^\varepsilon(\nabla \varphi) \cdot \, dB_s$ in \eqref{transintegralR2} converges weakly in $L^2(\Omega\times [0,T])$ to $\int_0^\cdot u(\nabla \varphi) \cdot \, dB_s$.

\medskip
Note that the coefficients $b^\varepsilon$ and $\dive b^\varepsilon$ are strongly convergent in $L^2_\loc([0,T] \times \R^d)$ and $L^1\big([0,T]; L^\infty(\R^d) \big)$ respectively. This implies that $b^\varepsilon \cdot \nabla \varphi + \varphi \dive b^\varepsilon$ strongly converges in $L^1([0,T]; L^2(\R^d) )$ to $b \cdot \nabla \varphi + \varphi \dive b$ because $\varphi$ is of compact support. We can therefore pass to the limit also in all the remaining terms in \eqref{transintegralR2}, to find that the limit process $u$ is a weak solution of \eqref{trasport}.

\bigskip
{\it Step 2: Regularity.}  Consider now the equation of the regularized problem \eqref{trasport-reg} written in It\^o integral form. Fix any $F\in \calX$, multiply the equation by $F$ and take expectations. We have that $V_{\varepsilon}(t,x) =  \mathbb{E} [u^{\varepsilon}  (t,x) F ]$ satisfies
\begin{align*}
V_\varepsilon(t,x)  =& \, V_0^\varepsilon (x) - \int_0^t b^\varepsilon (s,x) \cdot \nabla V_\varepsilon(s,x) \, ds \\
& - \E \Big[ \int_0^t \nabla u_\varepsilon(s,x)  \cdot dB_s \ F \Big] + \frac{1}{2} \int_0^t  \Delta V_\varepsilon(s,x) \, ds \, ,
\end{align*}
where $V_0^\varepsilon (x) = u_0^\varepsilon (x)$ because exponential integrals have expected value $1$. Since $\forall \varepsilon>0$ $u_\varepsilon$ is regular, for every fixed $x\in \R^d$, $Y_s = \nabla u^\varepsilon (s, x)$ is an adapted, square integrable process. Using the properties of stochastic integrals we can then rewrite the expected value of the stochastic integral as the expected value of a Lebesgue integral against the function $h$. We leave the proof of this technical step to the Appendix, see Lemma \ref{lemma B-F}. This allows us to obtain a more convenient form of the above equation:
$$
\begin{aligned}
    V_{\varepsilon}(t,x)  &=  V_{0}^\varepsilon(x)
   -\int_{0}^{t} \! \nabla V_{\varepsilon}(s,x) \cdot \big( {b^\varepsilon}(s,x) + h(s) \big) \, ds
     + \frac{1}{2}\int_{0}^{t} \!    \Delta V_{\varepsilon}(s,x)   \, ds \,.
\end{aligned}
$$

Rewrite this in differential form (recall that $V_\varepsilon$ is regular, because $u^\varepsilon$ is) and multiply the equation by $2V_\varepsilon$:
\begin{align*}
\partial_t V_{\varepsilon}^2(t,x)  &=    - \nabla V_{\varepsilon}^2(s,x) \cdot \big( {b^\varepsilon}(s,x) + h(s) \big)  +  V_\varepsilon \Delta V_{\varepsilon}(s,x)    \,.
\end{align*}
Now, integrating in time and space we get
$$
\begin{aligned}
    \int_{\R^d} V_{\varepsilon}^{2}(t,x)   \, dx = & \,  \int_{\R^d} \big(V_{0}^\varepsilon \big)^{2}(x) \, dx    \\[5pt]
     &  -  \int_{0}^{t} \int_{\R^d} \! \nabla \big( V_{\varepsilon}^{2} \big) (s,x) \cdot \big( b^\varepsilon (s,x) + h(s) \big) \, dx  ds \\[5pt]
     & 	- \int_{0}^{t}   \int_{\R^d} \!    \big| \nabla V_{\varepsilon}(s,x) \big| ^{2} \, dx  ds \, ,
\end{aligned}
$$
and rearranging the terms conveniently we finally obtain the bound
\begin{align}  \label{Gronwall1}
 \int_{\R^d} V_{\varepsilon}^{2}(t,x)  \, dx \  + &  \int_0^t \int_{\R^d} \!    |\nabla V_{\varepsilon}(s,x) | ^{2} \, dx  
 ds  \nonumber \\[5pt]
		& =	\int_{\R^d} \big(V_{0}^\varepsilon \big)^{2}(x)\,  dx
   +\int_{0}^{t} \int_{\R^d} V_{\varepsilon}^{2}(s,x) \   \dive  {b^\varepsilon}(s,x) \, dx  ds   \nonumber
		\\[5pt] 
		&\le \int_{\R^d} (V_{0}^\varepsilon)^{2}(x) \, dx +    \int_{0}^{t} \gamma(t) \int_{\R^d} 
		V_{\varepsilon}^{2}(s,x) \, dx  ds    \, ,
\end{align}
for some function $\gamma\in L^1(0,T)$ which can be chosen independently of $\varepsilon$, because $\dive b^\varepsilon$ is uniformly bounded in $L^1\big( [0,T] ; L^\infty (\R^d) \big)$. We can now apply Gr\"onwall's Lemma and obtain
\begin{equation}   \label{un}
    \int_{\R^d} V_{\varepsilon}^{2}(t,x)  \, dx  \leq C
			\int_{\R^d} (V_{0}^\varepsilon)^{2}(x) \, dx \, ,
\end{equation}
where the constant $C$ can be chosen uniformly in $\varepsilon$ due to the integrability of $\dive b$.

Plugging \eqref{un} into \eqref{Gronwall1} we also get
\begin{equation}\label{dos}
\int_{0}^{t}   \int_{\R^d} \!    \big| \nabla V_{\epsilon}(s,x)  \big| ^{2}  \, dx  ds   \leq   C   \int_{\R^d} \big( V_{0}^\varepsilon \big)^{2}(x) \, dx \, .
\end{equation}

From (\ref{un}) and (\ref{dos}) we deduce the existence of a subsequence $\varepsilon_n$ 
(which can be extracted from the subsequence used in the previous step) for which  $ V_{\varepsilon_n}(t,x)$ converges weakly to the function $V(t,x)=\E[u(t,x)\, F]$ in $ \calY$ and such that $ \nabla V_{n}(t,x)$ converges weakly to $\nabla V(t,x)$ in $ L^{2}([0,T] \times \R^d )$. This allows us 
 to conclude that $V\in L^2([0,T]; H^{1}(\R^d) ) \cap C([0,T]; L^2(\R^d))$.
  \end{proof}

\section{Uniqueness}
\label{UNIQUE}

In this section, we shall present a uniqueness theorem
for the SPDE (\ref{trasport}). 
As in the by now classical setting, the proof is based on the commutator Lemma \ref{conmuting}. If applied in the usual way, this lemma requires to have $W^{1,1}$ regularity either for the drift coefficient $b$ or for the solution $u$. This is precisely what we want to avoid: in our setting we have neither of them, since we want to deal with possibly discontinuous solutions and drift coefficients. However, the key observation is that it is enough to ask such Sobolev regularity of the expected values $V(t,x)= \E [u(t,x) F]$ for $F\in \calX$, not on the solution $u$ itself.

Before stating and proving the main theorem of this section, we shall introduce some further notation and the key lemma on commutators. We stress that in this section we will be working under both the sets of Hypothesis \ref{hyp} and \ref{hyp2}.
\bigskip

\bigskip
Let  $\{\rho_{\varepsilon} \}$ be a family of standard positive symmetric mollifiers. Given two functions $f:\R^d \mapsto \R^d$ and $g:\R^d \mapsto \R$, the commutator $\calR_\varepsilon(f,g)$ is defined as
\begin{equation}\label{def commut}
    \mathcal{R}_{\varepsilon}(f,g):= (f \cdot \nabla ) (\rho_{\epsilon}\ast g )- \rho_{\varepsilon}\ast  (f\cdot \nabla g ) \, .
  \end{equation}
The following lemma is due to Le Bris and Lions \cite{BrisLion}.

\begin{lemma} \label{conmuting} (C. Le Bris - P. L.Lions )
Let  $f \in  L^2_\loc(\R^d ) $,  $g \in H^{1}(\R^d) $. Then, 
passing to the limit as $\varepsilon\rightarrow 0$
\[
    \mathcal{R}_{\varepsilon}(f,g) \rightarrow 0  \qquad in  \qquad L^1_\loc(\R^d) \, .
  \]
\end{lemma}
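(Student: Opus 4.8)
The plan is to reduce everything to the elementary continuity of translations in $L^2$, which is exactly the regularity that the hypothesis $f \in L^2_\loc$ provides. The first step is to rewrite the commutator in a form that exposes the increment of $f$. Since $g \in H^1(\R^d)$, convolution commutes with the weak gradient, so $\nabla(\rho_\varepsilon \ast g) = \rho_\varepsilon \ast \nabla g$, and therefore
\[
\calR_\varepsilon(f,g)(x) = f(x)\cdot(\rho_\varepsilon \ast \nabla g)(x) - \big(\rho_\varepsilon \ast (f\cdot\nabla g)\big)(x) = \int_{\R^d} \rho_\varepsilon(x-y)\,\big(f(x)-f(y)\big)\cdot\nabla g(y)\,dy.
\]
Performing the change of variables $y = x - \varepsilon z$ and writing $\rho_\varepsilon(x-y) = \varepsilon^{-d}\rho(z)$, this becomes
\[
\calR_\varepsilon(f,g)(x) = \int_{\R^d} \rho(z)\,\big(f(x)-f(x-\varepsilon z)\big)\cdot\nabla g(x-\varepsilon z)\,dz.
\]

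Next I would fix an arbitrary ball $B_R$ and estimate the $L^1(B_R)$ norm. Integrating in $x$, applying Fubini to bring $\int\rho(z)\,dz$ outside, and then using Cauchy--Schwarz in the $x$ variable, I obtain
\[
\|\calR_\varepsilon(f,g)\|_{L^1(B_R)} \le \|\nabla g\|_{L^2(\R^d)} \int_{\R^d} \rho(z)\,\big\| f(\cdot)-f(\cdot-\varepsilon z)\big\|_{L^2(B_R)}\,dz.
\]
Here I use that, since $\rho$ is supported in the unit ball, for $\varepsilon \le 1$ and $x \in B_R$ the argument $x-\varepsilon z$ stays in $B_{R+1}$, so all the quantities above involve only values of $f$ and $\nabla g$ on a fixed compact set; in particular $f \in L^2(B_{R+1})$ by local integrability.

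The conclusion then follows by dominated convergence in the $z$ integral. For each fixed $z$ in the support of $\rho$, the continuity of translations in $L^2(B_{R+1})$ gives $\|f(\cdot)-f(\cdot-\varepsilon z)\|_{L^2(B_R)}\to 0$ as $\varepsilon\to 0$; and the integrand is dominated, uniformly in $\varepsilon\le 1$, by $2\rho(z)\|f\|_{L^2(B_{R+1})}$, which is integrable in $z$. Hence the right-hand side tends to $0$, and since $R$ was arbitrary this yields $\calR_\varepsilon(f,g)\to 0$ in $L^1_\loc(\R^d)$.

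The only genuinely delicate point is the passage from the low regularity of $f$ to a quantitative estimate, and the whole difficulty is concentrated in the translation-continuity step: it is precisely where $L^2_\loc$ regularity of $f$ is needed, and it is what allows the roles of the two factors to be swapped relative to the classical DiPerna--Lions commutator, where the derivative sits on the vector field and the scalar is merely bounded. An equivalent route avoiding an appeal to translation continuity would be to prove the claim first for smooth $f$ --- where $|f(x)-f(x-\varepsilon z)| = O(\varepsilon)$ makes the estimate immediate --- and then approximate a general $f\in L^2_\loc$ by smooth fields, using the uniform-in-$\varepsilon$ bound $\|\calR_\varepsilon(f,g)\|_{L^1(B_R)} \le C\,\|f\|_{L^2(B_{R+1})}\,\|\nabla g\|_{L^2}$ obtained by bounding the two terms of the commutator separately through Young's and Cauchy--Schwarz inequalities.
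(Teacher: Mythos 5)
Your proof is correct. Note that the paper itself does not prove this lemma: it is stated with a citation to Le Bris--Lions \cite{BrisLion}, so there is no in-paper argument to compare against, and a self-contained proof is a genuine addition. Your route --- rewriting the commutator as $\int \rho(z)\bigl(f(x)-f(x-\varepsilon z)\bigr)\cdot \nabla g(x-\varepsilon z)\,dz$, estimating the $L^1(B_R)$ norm via Fubini and Cauchy--Schwarz, and concluding by continuity of translations in $L^2$ together with dominated convergence in $z$ --- is the standard argument and matches the cited reference in spirit; it correctly exploits that here the derivative sits on $g\in H^1$ while only square-integrability is asked of $f$, which is the reversal of roles relative to the classical DiPerna--Lions commutator. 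The one point worth writing out explicitly is the translation-continuity step: since $f$ is only in $L^2_{\loc}$, you should apply continuity of translations to the truncation of $f$ to a slightly larger ball (say $B_{R+2}$), which is legitimate because for $\varepsilon\le 1$ and $z$ in the unit ball only the values of $f$ on $B_{R+1}$ enter the estimate over $B_R$; your text already gestures at this, so it is a matter of presentation rather than a gap. The alternative you sketch (uniform bound $\|\calR_\varepsilon(f,g)\|_{L^1(B_R)}\le C\|f\|_{L^2(B_{R+1})}\|\nabla g\|_{L^2}$ plus density of smooth $f$) is equally valid and is the form of the argument most directly reusable for variants of the lemma.
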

\bigskip

We can finally state our uniqueness result.
\bigskip

\begin{theorem}\label{uni} 
Under the conditions of Hypothesis \ref{hyp} and \ref{hyp2}, uniqueness holds for quasiregular weak solutions of  the Cauchy problem \eqref{trasport} in the following sense:
if $u,v \in L^2 \cap L^\infty \big(\Omega \times [0,T] \times \R^d \big)$ are two quasiregular weak solutions 
with the same initial data $u_{0}\in L^{2}(\mathbb{R}^{d}) \cap L^{\infty}(\mathbb{R}^{d})$, then  $u= v$ almost everywhere 
in $ \Omega  \times [0,T] \times \R^d $. 
\end{theorem}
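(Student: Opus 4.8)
The argument will combine linearity, uniqueness for the parabolic family \eqref{para}, and the totality of the stochastic exponentials $\calX$ in $L^2(\Omega,\calF_T,\PP)$. Since the weak formulation \eqref{DISTINTSTR} is linear in the solution, the difference $w:=u-v$ is again a quasiregular weak solution, now with zero initial datum; its regularity in mean is inherited from $\E[wF]=\E[uF]-\E[vF]$. Fix $F\in\calX$ with associated $h\in L^2([0,T];\R^d)$ and set $V:=\E[wF]$. By Lemma \ref{lem:eq V}, $V\in L^2([0,T];H^1(\R^d))\cap C([0,T];L^2(\R^d))$ solves \eqref{para} weakly with $V_0=w_0=0$; moreover, since $F>0$ with $\E[F]=1$ and $w\in L^\infty$, one has $\|V\|_\infty\le\|w\|_\infty$, so $V$ is also bounded. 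It therefore suffices to prove $V\equiv0$ for every such $F$.

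I would prove this uniqueness for \eqref{para} by a DiPerna--Lions-type energy estimate made rigorous through the commutator Lemma \ref{conmuting}. Writing $V_\varepsilon:=\rho_\varepsilon\ast V$ and noting that $h=h(t)$ commutes with the spatial mollification, so that $\calR_\varepsilon(b+h,V)=\calR_\varepsilon(b,V)$, the mollified equation reads $\partial_t V_\varepsilon+(b+h)\cdot\nabla V_\varepsilon=\tfrac12\Delta V_\varepsilon+\calR_\varepsilon(b,V)$. Multiplying by $V_\varepsilon\theta_R$, with $\theta_R(x)=\theta(x/R)$ a smooth compactly supported cut-off, and integrating over $\R^d$, I would first let $\varepsilon\to0$ at $R$ fixed: by Lemma \ref{conmuting}, $\calR_\varepsilon(b,V)\to0$ in $L^1_\loc(\R^d)$, and since $V_\varepsilon\theta_R$ is bounded with support in $\{|x|\le 2R\}$, the commutator term drops out while the remaining terms converge to their counterparts for $V$. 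This yields a localized energy identity for $V$.

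Next I would let $R\to\infty$. The diffusive cut-off error is of order $R^{-1}\|\nabla V\|_{L^2}\|V\|_{L^2}\to0$, while the transport cut-off error $\tfrac12\int (b+h)\cdot\nabla\theta_R\,V^2\,dx$ is supported on the shell $\{R\le|x|\le2R\}$. Here Hypothesis \ref{hyp2} is essential: on that shell $|b(t,x)|\le(1+|x|)\frac{|b(t,x)|}{1+|x|}\lesssim(1+R)\,g(t)$ with $g\in L^1(0,T)$, so after the factor $|\nabla\theta_R|\lesssim R^{-1}$ the integrand is dominated by $g(t)\|V\|_\infty^2$ on a shell whose $L^2$-mass of $V$ tends to $0$, and the error vanishes. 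One is then left with $\frac{d}{dt}\|V(t)\|_{L^2}^2\le\|\dive b(t)\|_{L^\infty}\|V(t)\|_{L^2}^2$; since $V_0=0$ and $\dive b\in L^1([0,T];L^\infty(\R^d))$, Gr\"onwall's lemma forces $V\equiv0$. I expect this globalization on $\R^d$ — the control of the cut-off error at infinity, which is exactly where the growth Hypothesis \ref{hyp2} enters — to be the main obstacle, together with the bookkeeping of the order of limits ($\varepsilon\to0$ before $R\to\infty$), since Lemma \ref{conmuting} provides only local convergence of the commutator.

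It remains to pass from $\E[w(t,x)F]=0$ for all $F\in\calX$ to $w=0$. Because $\calF=\calF_T$ is the Brownian filtration, the linear span of $\calX$ is dense in $L^2(\Omega,\calF_T,\PP)$. Choosing a countable set $\{h_n\}$ dense in $L^2([0,T];\R^d)$ and the corresponding exponentials $F_n\in\calX$, whose span is still total, for each $n$ we have $\E[w(t,x)F_n]=0$ off a null set $N_n\subset[0,T]\times\R^d$. Off $N=\bigcup_n N_n$ this holds for all $n$, and since $w(t,x)\in L^2(\Omega)$ is orthogonal to a total family it follows that $w(t,x)=0$ almost surely; hence $w=0$ a.e. in $\Omega\times[0,T]\times\R^d$, that is, $u=v$.
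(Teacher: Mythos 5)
Your proposal is correct and follows essentially the same route as the paper's proof: reduce by linearity to $u_0=0$, mollify the parabolic equation for $V=\E[uF]$ and control the commutator via Lemma \ref{conmuting}, localize with a cut-off, pass to the limit first in $\varepsilon$ then in $R$ (using Hypothesis \ref{hyp2} exactly where you place it), apply Gr\"onwall, and conclude by totality of $\calX$ in $L^2(\Omega)$. The only cosmetic difference is in the last step, where you handle the exceptional null sets via a countable total subfamily of $\calX$ while the paper integrates $\E[uF]$ against deterministic test functions $f(t,x)$; both are valid.
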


\begin{proof} The proof is essentially based on energy-type estimates on $V$ (see equation \eqref{eqen} below) combined with Gr\"onwall's Lemma. 
However, to rigorously obtain \eqref{eqen} two preliminary technical steps of regularization and localization 
are needed, where the above Lemma \ref{conmuting} will be used to deal with the commutators appearing 
in the regularization process.  \medskip

{\it Step 0: Set of solutions.} Remark that the set of quasiregular weak solutions is a linear subspace of $L^2 \big(\Omega \times [0,T] \times \R^d \big)$, because the stochastic transport equation is linear, and the regularity conditions is a linear constraint. Therefore, it is enough to show that a quasiregular weak
solution $u$ with initial condition $u_0= 0$ vanishes identically. \bigskip

{\it Step 1: Smoothing.}
Let $\{\rho_{\varepsilon}(x)\}_\varepsilon$ be a family of standard symmetric mollifiers. For any $\varepsilon>0$ and $x\in\R^d$ we can use $\rho_\varepsilon(x-\cdot)$ as test function in the equation \eqref{eq V} for $V$. Observe that considering only quasiregular weak solutions starting from $u_0=0$ results in $V_0=0$. Using the regularity of $V$, we get
$$
\begin{aligned}
      \int_{\R^d} V(t,y) \rho_\varepsilon(x-y) \, dy  = &\, - \int_{0}^{t}  \int_{\R^d} \big( b(s,y) \cdot \nabla V(s,y)  \big)  \rho_\varepsilon(x-y) \ dy ds    \\[5pt]
        &-  \int_{0}^{t} \int_{\R^d} \! \big( h(s) \cdot \nabla V(s,x) \big) \rho_\varepsilon(x-y) \, dy ds     \\[5pt]
    & - \frac{1}{2}\int_{0}^{t}   \int_{\R^d} \! \nabla V(s,y) \cdot  \nabla_y \, \rho_\varepsilon(x-y) \, dy  ds \,.
\end{aligned}
$$
Set $V_\varepsilon(t,x)= V(t,x) \ast_x \rho_\varepsilon(x)$. Using the definition \eqref{def commut} of the commutator $\big(\calR(f,g)\big) (s)$ with $f=b(s, \cdot)$ and $g=V(s, \cdot)$, we have for each $t \in [0,T]$
$$
\begin{aligned}
    V_{\varepsilon}(t,x) + \int_{0}^{t} \big( b(s,x) + h(s) \big) \cdot  \nabla V_{\varepsilon}(s,x) \,  ds   & -    \frac{1}{2}\int_{0}^{t}   \Delta V_{\varepsilon}(s,x) \, ds      \\[5pt]
    & =         \int_{0}^{t} \big(\mathcal{R}_{\epsilon}(b,V) \big) (s) \,  ds  \, .
\end{aligned}
$$
By the regularity of $b$ and $V$, provided by \eqref{con1} and the Definition of solution \ref{defisolu}, one easily obtains that $\mathcal{R}_{\epsilon}(b,V) \in L^1([0,T]) $. Therefore, $V_\varepsilon$ is differentiable in time. To obtain an equation for $V_\varepsilon^2$ we can differentiate the above equation in time, multiply by $2V_\epsilon$ and integrate again. We end up with
\begin{equation}
\label{1000}
\begin{aligned}
    V_{\varepsilon}^{2}(t,x) + \int_{0}^{t} \big( b(s,x) + h(s) \big)  \cdot  \nabla \big( V_{\varepsilon}^{2}(s,x) \big) \, ds
-  \int_{0}^{t}    V_\varepsilon (s,x)  \Delta V_{\varepsilon} (s,x) \, ds
\\[5pt]
	=	 2 \int_{0}^{t}  V_{\varepsilon}(s,x)   \mathcal{R}_{\epsilon}(b,V) \, ds  \, .
\end{aligned}
\end{equation}
Remark that, by definition of solution, $V$ is bounded. Therefore, $V_\varepsilon$ is uniformly bounded. It follows that all the terms above have the right integrability properties, and the equation is well-defined.

\bigskip
{\it Step 2: Localization.} Consider a  nonnegative smooth cut-off function $\eta$ supported on the ball of
 radius 2 and such that  $\eta=1$ on the ball of radius 1. For any $R>0$ introduce the rescaled functions 
$\eta_R (\cdot) =  \eta(\frac{.}{R})$. Multiplying \eqref{1000} by $\eta_R$ and integrating over $ \R^d $
 we have 
$$
\begin{aligned}
   &  \int_{\R^d}   V_{\varepsilon}^{2}(t,x)   \eta_R(x)  \, dx  + \int_{0}^{t}  \int_{\R^d}  \big(  b(s,x) + h(s) \big)  \cdot  \nabla \big( V_{\varepsilon}^{2}(s,x) \big)  \eta_R(x) \, dx ds
\\[5pt]
       +& \int_{0}^{t}   \int_{\R^d}  |\nabla V_{\varepsilon}(s,x)|^{2}   \eta_R(x) \, dx  ds + \int_{0}^{t}   \int_{\R^d}  V_\varepsilon (t,x) \big( \nabla V_\varepsilon (t,x) \cdot  \nabla \eta_R(x) \big)  dx ds   \\[5pt]
    & \hspace{2cm} =  2 \int_{0}^{t}  \int_{\R^d}    V_{\varepsilon}(s,x)   \mathcal{R}_{\epsilon}(b,V)    \eta_R(x) \, dx   ds   \,  ,
\end{aligned}
$$
which we rewrite as
\begin{align} \label{eq V_eps^2}
   & \int_{\R^d}   V_{\varepsilon}^{2}(t,x)   \eta_R(x)  \,  dx      \nonumber   \\[5pt]
     & -\int_{0}^{t}  \int_{\R^d}  V_{\varepsilon}^{2}(s,x) \Big[  \big( b(s,x) + h(s) \big) \cdot \nabla \eta_R(x) +  \eta_R(x) \dive  b(s,x) \Big] \, dx ds       \nonumber  \\[5pt]
     & + \int_{0}^{t}   \int_{\R^d}  |\nabla V_{\varepsilon}(s,x)|^{2}   \eta_R(x) \, dx  ds  + \int_{0}^{t}   \int_{\R^d}  V_\varepsilon (t,x) \big( \nabla V_\varepsilon (t,x) \cdot \nabla \eta_R(x) \big)  dx ds   \nonumber   \\[5pt]
	 & \hspace{4cm}   =   2 \int_{0}^{t}  \int_{\R^d}    V_{\varepsilon}(s,x)   \mathcal{R}_{\epsilon}(b,V)    \eta_R(x) \, dx  ds   \, .
\end{align}

\bigskip
{\it Step 3: Passage to the limit.} Finally, in this step we shall pass to the limit in $\varepsilon$ and $R$ 
 to obtain uniqueness. 

Recall that due to \eqref{repr formula} $u$ is bounded, so that $V$ and $V_\varepsilon$ are (uniformly) bounded too. We first take the limit $\varepsilon\rightarrow 0$ in the above equation \eqref{eq V_eps^2}. By standard properties of mollifiers $V_\varepsilon \to V$ strongly in $L^{2}\big([0,T]; H^1(\R^d)\big) \cap C([0,T] ; L^2 (\R^d) )$, and we can use
 Lemma \ref{conmuting} and the uniform boundedness of $V_\varepsilon$ to deal with the
  term on the right hand side. We get
\begin{equation}\label{eqen}
\begin{aligned}
    \int_{\R^d}   V^{2}(t,x)   \eta_R(x) \, dx \,  + & \int_{0}^{t}   \int_{\R^d}  |\nabla V(s,x)|^{2}   \eta_R(x) \, dx  ds
\\[5pt]
+& \int_{0}^{t}   \int_{\R^d}  V (t,x) \big( \nabla V (t,x) \cdot \nabla \eta_R(x) \big)  dx ds \\
	&=\int_{0}^{t}  \int_{\R^d}  V^{2}(s,x) \big( b(s,x) + h(s) \big) \cdot  \nabla \eta_R(x) \,  dx   ds\\[5pt]
	&	\quad   +\int_{0}^{t}  \int_{\R^d}  \dive  b(s,x)   V^{2}(s,x)  \eta_R(x) \,  ds   \, .
\end{aligned}
\end{equation}
Using \eqref{con3} and the definition of $\eta_R$ we can now get rid of the first term on the right hand
 side by taking the limit $R\to\infty$. Indeed, for $R\ge 1$ we have that
\begin{align*}
(b+h)\cdot \nabla \eta_R & \le \big( |b| + |h| \big) \frac{\|\nabla \eta \|_\infty}{R}  \indicator{[R,2R]} \\
& \le 3 \|\nabla \eta \|_\infty \Big( \frac{|b|}{1+|x|} + \frac{|h|}{3R} \Big) \indicator{[R,2R]}
 \end{align*} 
 is bounded in $L^1\big( [0,T] ; L^\infty(\R^d)\big)$. Moreover, by definition of quasiregular weak solution we have that $V \in L^\infty\big( [0,T] ; L^2(\R^d) \big)$, and since the domain of integration (the support of $\nabla \eta_R$) leaves any compact as $R\to \infty$, we even have that $V \indicator{[R,2R]}$ goes to zero in $L^\infty\big( [0,T] ; L^2(\R^d) \big)$. Therefore,
$$
     \lim_{R \to \infty}	\int_{0}^{t}  \int_{\R^d}   V^{2}(s,x) \big(  b(s,x) +h(s) \big) \cdot  \nabla \eta_R(x) \,  dx   ds  = 0 \, .
$$
Likewise, since  $\nabla V \cdot \nabla \eta_R$ goes to zero in $L^\infty\big( [0,T]; L^2(\R^d) \big) $, also the last term of the left hand side of \eqref{eqen} goes to zero. 
We are left with
$$
\begin{aligned}
    \int_{\R^d}   V^{2}(t,x) \,  dx  &  +     \int_{0}^{t}   \int_{\R^d}  |\nabla V(s,x)|^{2} \,  dx  ds    =\int_{0}^{t}  \int_{\R^d}   \dive  b(s,x)   V^{2}(s,x)\,  dx  ds   \, .
\end{aligned}
$$
 By condition \eqref{con2}, we may write 
$$
     \int_{\R^d}   V^{2}(t,x)  \, dx  \leq  \int_{0}^{t}  \gamma(s)  \int_{\R^d}     V^{2}(s,x)  \, dx ds
$$
for some function $\gamma\in L^1(0,T)$. Applying Gr\"onwall's Lemma we conclude that for every $t\in[0,T]$, $V(t,x)=\E[u(t,x) F]=0$ for almost every $x\in\R^d$ and every $F\in \calX$. 

\bigskip
{\it Step 4: Conclusion.} From the result of the previous step we get that $\int_{[0,T]\times\R^d} \E [u(t,x) F] f(t,x)\, dx dt =0$ for all $F\in \calX$ and $f\in C^\infty_c ([0,T] \times \R^d)$. By linearity of the integral and the expected value we also have that
\begin{equation}\label{eq uY}
\int_{\R^d} \E\big[ u(t,x) \,Y \big] f(t,x) \, dx dt =0
\end{equation}
for every random variable $Y$ which can be written as a linear combination of a finite number of $F\in \calX$. Since by Lemma \ref{expo} the span generated by $\calX$ is dense in $L^2(\Omega)$, \eqref{eq uY} holds for any $Y\in L^2(\Omega)$. Linear combinations of products of functions $Y f(t,x)$ are dense in the space of test functions $\psi(\omega,x,t) \in L^2(\Omega  \times [0,T]\times \R^d)$, so that
\begin{equation*}
\int_{[0,T]\times \R^d} \E \big[ u(t,x) \,\psi(\omega,t,x) \big]  \, dxdt =0  \, ,
\end{equation*}
and $u=0$ almost everywhere on $\Omega\times [0,T] \times \R^d$. 
\end{proof}

\begin{remark}
Since our solutions to the SPDE \eqref{trasport} are only integrable, we cannot expect to obtain an uniqueness result stronger than {\it ``almost everywhere"}. However, as soon as the solution $u$ is integrated against a test function in space ($u(\varphi)$ with the notation of Section \ref{EXISTENCE}) or in $\omega$ ($V$), we obtain a function which is continuous in time. Therefore, one can obtain that for any $\varphi\in C^\infty_c(\R^d)$, $u(\varphi)=0$ almost surely for all $t\in [0,T]$, or that for any $F\in \calX$, $V=\E[uF]=0$ for almost every $x\in \R^d$, for all $t\in [0,T]$.
\end{remark}

\begin{remark} 
\label{REMAS}  From the proof of Theorem \ref{lemmaexis1} it is possible to see that, under our weak hypothesis, {\bf any} weak solution $u$ of the Cauchy problem \eqref{trasport} which is the $L^\infty\big( \Omega ; \calY \big)$-limit of weak solutions to regularized problems has the regularity of a quasiregular 
weak solution, and is therefore unique by Theorem \ref{uni}. 
In other words, we have also proved uniqueness in the sense of Theorem \ref{uni} 
in the class of solutions which are limit of regularized problems.
\end{remark}

\section{Application to Stochastic Muskat Problem}
\label{SMP}

In this section we give an important motivation
of the theory developed in the previous sections, where the
uniqueness result in the class of quasiregular  solutions can 
be used to establish 
existence of solutions to the Stochastic Muskat 
problem. The model considered here is a stochastic 
generalization of the 
original Muskat problem, which was proposed in 1934 by 
Muskat \cite{MUSKAT} to study from Darcy's law
the encroachment of water into an oil sand. 
In fact, the model follows the main ideas in \cite{NCWN2},
with a (small) Brownian noise perturbation of the continuity equation. 
Note that with our choice of random perturbation we do not change the hyperbolic type condition of the 
original continuity equation, and further maintain the original structure of the Darcy's law for the
velocity vector field. 

\medskip
The Muskat problem is a piston--like displacement of two
immiscible fluids in a porous media. We will use the subscripts $o$ and  $w$, to distinguish between each phase of the mixture. 
We shall assume that, there exist 
a domain $U \subset \mathbb{R}^{d}$ occupied by the fluids,
which are separated by an
unknown (free) surface $S$ of
co--dimension one. 
Under the assumption that the fluids are immiscible, 
the domain $U$ is given by the union of two disjoint sets $U _{o}(t)$ and 
$U_{w}(t)$, with the common surface $S(t)$. 
For each $t\geqslant 0$ and $%
x\in U$ the mixture density $\rho (t,x)$ is given by 
\begin{align}
\rho (t,x)= \rho_w(t,x) \, 1_{U_w(t)}(x) +
\rho_o(t,x) \, 1_{U_o(t)}(x),  \label{RHO}
\end{align}
also the velocity $\mathbf{v}(t,x)$ of the mixture is 
$$
   \mathbf{v}(t,x)=\mathbf{v}_{w}(t,x)\;1_{U _{w}(t)}(x)
   +\mathbf{v}_{o}(t,x)\;1_{U _{o}(t)}(x),
$$
where $\mathbf{v}_{\iota }(t,x)$, $(\iota =o,w)$, is the velocity
field of each component.

\medskip 
Let $\sigma> 0$ be a (small) parameter. 
Then we consider a stochastic balance of mass (also
called stochastic intrinsic continuity equation)
\begin{equation}
\partial _{t}\rho (t,x,\omega)+\mathrm{div} \Big(\rho (t,x,\omega)\; 
\big(\mathbf{v}(t,x) + \sigma \frac{d B_{t}}{dt}(\omega) \big) \Big)= 0.  \label{CEQDF}
\end{equation}
Moreover, from the
continuity of the normal velocities of the fluids on $S(t)$ and the
assumption that the fluids are incompressible, 
it follows that 
\begin{equation}
\mathrm{div} \, \mathbf{v}(t,x)=0.  \label{di}
\end{equation}

Now, let us consider the Conservation of Linear Momentum,
which follows from the Darcy's law (empirical) equation.
Denoting by $\mathbf{g}$ a given body force density,
the evolution of the velocity $\mathbf{v}$ is described by 
\begin{align}
 h(t,x,\rho \nu)\;\mathbf{v}(t,x)
&=- {\nabla }p(t,x)+ \mathbf{G}(t,x),  \notag \\
\mathbf{G}(t,x)&= \mathbb{E} \big[ \rho \, \mathbf{g} \big],
 \label{TMPV}
\end{align}%
where $h \geq h_0>0$ (takes into account the properties of the porous medium),
and the kinematic viscosity of the fluid mixture 
\begin{equation*}
\nu (t,x)=\nu _{w}(t,x)\,1_{U _{w}(t)}(x%
)+\nu _{o}(t,x)\,1_{U _{o}(t)}(x)
\end{equation*}%
is governed, analogously to the density, by the stochastic transport equation 
\begin{equation}
\partial _{t}\nu (t,x,\omega)+\mathrm{div}\Big(\nu (t,x,\omega)\;\mathbf{%
\big(v}(t,x)  + \sigma \frac{d B_{t}}{dt} \big) \Big)=0. \label{VEQDF}
\end{equation}%
The function $p \geq 0$ is
called the pressure of the fluid mixture, which
is defined by a similar formula to \eqref{RHO}. 
Since the function $h$ represents the medium properties,
and it is usually provided by mean values of small parts of the porous media, we consider
\begin{equation}
\label{HE}
   H(t,x)= \mathbb{E} \big[ h(t,x,\mu) \big] .
\end{equation}

\medskip
Finally, from equations \eqref{CEQDF}--\eqref{HE}, 
we introduce the stochastic 
Muskat type problem, denoted $\mathbf{SMP}$:
Let $T>0$ be any real number and $U  \subset \mathbb{R}%
^{d}$ $(d=2$ or $3)$ be an open and bounded domain having
smooth boundary.
For each $(t,x)\in U _{T}=[0,T] \times U$, find $(\rho (t,x,\omega),\nu (t,%
x,\omega),\mathbf{v}(t,x))$ solution of 
\begin{equation}
\label{SSMP}
\left\{ \begin{aligned} &\partial_t \rho 
+ \Big(\mathbf{v} + \sigma  \frac{d B_{t}}{dt} \Big) \cdot{\nabla} \, \rho=
0 \, , \qquad
\partial_t \nu  + \Big(\mathbf{v} + \sigma \frac{d B_{t}}{dt} \Big) \cdot {\nabla}\, \nu= 0 \, , \\[5pt]
& H \; \mathbf{v}= - {\nabla} p + \mathbf{G} \,, \quad {\rm
div}\, \mathbf{v}= 0 \,, 
\\[5pt] &\rho|_{t=0}= \rho_0 \, , \quad
\nu|_{t=0}= \nu_0\, , \quad  (\mathbf{v} \cdot \mathbf{n}) \big|_{\Gamma_T} = 0 \, ,
\end{aligned}\right.
\end{equation}
where $\rho _{0}$, 
$\nu _{0}$ are given initial data for the density and
viscosity respectively, and $\mathbf{n}$ 
is the unitary normal field to $\Gamma_T= [0,T] \times \partial U$. 
One recalls that, by Theorem 1.2 of \ \cite{Temam} for any vector field 
$\mathbf{v} \in L^{2}$, 
satisfying $\mathrm{div} \mathbf{v}= 0$ in
the distribution sense,  
the normal component of $\mathbf{v}$ i.e. 
$\mathbf{v}_{\mathbf{n}}:= \mathbf{v} \cdot \mathbf{n}$, exists and
belongs to $H^{-1/2}$.

The next definition tells us in which sense a triple $(\rho,\nu,\mathbf{v})$ is a weak solution of 
\eqref{SSMP}.

\begin{definition}
\label{defisoluSMP}  
Given $\rho_0, \nu_0 \in L^\infty(U)$, 
a triple $(\rho,\nu,\mathbf{v})$ is called a weak solution to $\mathbf{SMP}$, 
if $\rho, \nu \in L^\infty \big( \Omega \times[0,T] \times U \big)$ are
stochastic processes, and $\mathbf{v} \in L^2((0,T) \times U)$ satisfy:

\begin{itemize}
\item For any $\varphi \in C_c^{\infty}(\R^d)$, the real valued processes 
$\int \rho(t) \varphi dx$,  $\int \nu(t) \varphi dx$, have continuous modification which are
$\mathcal{F}_{t}$-semimartingale, and for all $t \in [0,T]$, we have $\mathbb{P}$-almost surely

\begin{equation}
\label{DISTRHO}
\begin{aligned}
    \int_{U} \rho(t) \varphi \ dx = &\int_{U} \rho_{0} \varphi \ dx
   +\int_{0}^{t} \!\! \int_{U} \rho(s) \ \mathbf{v}^i(s) \partial_{i} \varphi \ dx ds
\\[5pt]
    &+ \int_{0}^{t} \!\! \int_{U} \rho(s) \ \partial_{i} \varphi \ dx \, {\circ}{dB^i_s}, 
\end{aligned}
\end{equation}

\begin{equation}
\label{DISTNU}
\begin{aligned}
    \int_{U} \nu(t) \varphi \ dx = &\int_{U} \nu_{0} \varphi \ dx
   +\int_{0}^{t} \!\! \int_{U} \nu(s) \ \mathbf{v}^i(s) \partial_{i} \varphi \ dx ds
\\[5pt]
    &+ \int_{0}^{t} \!\! \int_{U} \nu(s) \ \partial_{i} \varphi \ dx \, {\circ}{dB^i_s} \, .
\end{aligned}
\end{equation}

\item For each test function $\boldsymbol{\psi} \in \mathbf{V}(U)$
\begin{equation}
   \int_U H(t) \mathbf{v}(t) \cdot \boldsymbol{\psi} \ dx= \int_U \mathbf{G}(t) \cdot \boldsymbol{\psi} \, dx,
\end{equation}
where $\mathbf{V}(U):= \{\boldsymbol{\psi} \in L^2(U): \dive \boldsymbol{\psi}= 0 \; \text{in} \; \clg{D}'(U), 
\boldsymbol{\psi} \cdot \mathbf{n}= 0 \; \text{on} \; \partial U \}$.
\end{itemize}
\end{definition}

The solution of this problem is still open, 
and we leave this labor for future research.
But we believe that our contribution, 
providing a well-posedness result for the stochastic transport equations 
under very weak hypothesis on the drift term, is a first essential step 
towards the solution of the SMP.
Indeed, one way to solve it is trying 
to apply Schauder's fixed point argument.
Let us give the main idea:

First, we may consider 
$\underline{M}= \min \{\|\rho\|_{\infty},\|\nu\|_{\infty} \}$, $\overline{M}= \max \{\|\rho\|_{\infty},\|\nu\|_{\infty} \}$,
and define the closed convex subset 
\begin{equation}
\mathcal{Z}:=\{(\rho ,\nu ) \in L^{2}(\Omega \times [0,T] \times U)^{2}:
\rho,\nu \in \lbrack \underline{M}, \overline{M} \rbrack \quad \text{a.e.}\}
\label{eqt2.5}
\end{equation}%
of the Banach space $L^{2}(\Omega \times [0,T] \times U)^{2}$, with the norm 
\begin{equation*}
    ||(\rho ,\nu )||_{L^{2}}\!\!
    :=||\rho ||_{L^{2}}\!\!+||\nu ||_{L^{2}}.
\end{equation*}

Now, let $(\overline{\rho },\overline{\nu })$ be an arbitrary fixed
element of $\mathcal{Z}$, and consider for $\sigma> 0$ the coupled system 
\begin{equation}
\begin{cases}
     \mathbb{E} [h(t,x,\overline{\rho }\ \overline{\nu })] \mathbf{v}
     = -\nabla p +  \mathbb{E} [\overline{\rho } \mathbf{g}],
     \quad \mathrm{div}\mathbf{v}=0\quad \text{in } U, 
     \\ 
     \mathbf{v} \cdot \mathbf{n}= 0 \quad \text{on } \partial U,
\end{cases}
\label{a}
\end{equation}%
and
\begin{equation}
\begin{cases}
    \partial _{t}\rho +\mathrm{div}((\mathbf{v} + \sigma \frac{dB_t}{dt}) \ \rho )=0, 
\\ 
\rho |_{t=0}=\rho _{0}, 
\end{cases}%
\quad 
\begin{cases}
\partial _{t}\nu +\mathrm{div}((\mathbf{v} + \sigma \frac{dB_t}{dt}) \ \nu )= 0,
\\  
\nu |_{t=0}=\nu _{0}.
\end{cases}
\label{b}
\end{equation}

Due to $\overline{\rho }$ and $\overline{\nu}$ regularities, it follows that we just have 
$\mathbf{v} \in L^2 \big([0,T] \times U  \big)$ as solution of \eqref{a}. 
Albeit, since the domain $U$ is bounded and taking into account the incompressibility condition \eqref{di}, 
we see that $\mathbf{v}$ satisfies both Hypothesis \ref{hyp} and \ref{hyp2}.  
Then, applying the well-posedness theory established in 
the preceding sections for stochastic transport equations, 
it is not difficult to show the solvability result for this system as presented in the following

\begin{lemma}
\label{PPA} For each $(\overline{\rho },\overline{\nu })\in \mathcal{Z}$,
there exists a unique solution $(\rho ,\nu ,\mathbf{v})$ \ of system %
\eqref{a}--\eqref{b}, such that%
\begin{equation}
(\rho ,\nu )\in \mathcal{Z},\qquad \Vert \mathbf{v}\Vert _{L^{2}((0,T) \times U)} \leqslant C,  \label{EUU}
\end{equation}%
where $C\geq 0$ is a positive constant depending only on data.
\end{lemma}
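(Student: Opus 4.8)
The plan is to decouple the system: first solve the stationary Darcy problem \eqref{a} for the velocity $\mathbf{v}$ at (almost) every fixed time $t$, and then feed the resulting drift into the two independent stochastic transport equations \eqref{b}, to which the results of Sections \ref{EXISTENCE} and \ref{UNIQUE} apply. Since $\overline{\rho},\overline{\nu}\in[\underline{M},\overline{M}]$ are bounded, the coefficient $H=\mathbb{E}[h(t,x,\overline{\rho}\,\overline{\nu})]$ is bounded and satisfies $H\geq h_0>0$, while $\mathbf{G}=\mathbb{E}[\overline{\rho}\,\mathbf{g}]\in L^2$. Testing against divergence-free fields $\boldsymbol{\psi}\in\mathbf{V}(U)$ eliminates the pressure gradient, since $\int_U\nabla p\cdot\boldsymbol{\psi}\,dx=0$ for $\boldsymbol{\psi}\in\mathbf{V}(U)$, so the weak formulation reduces to finding $\mathbf{v}(t,\cdot)\in\mathbf{V}(U)$ with $\int_U H\,\mathbf{v}\cdot\boldsymbol{\psi}\,dx=\int_U\mathbf{G}\cdot\boldsymbol{\psi}\,dx$ for all $\boldsymbol{\psi}\in\mathbf{V}(U)$. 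The bilinear form $(\mathbf{v},\boldsymbol{\psi})\mapsto\int_U H\,\mathbf{v}\cdot\boldsymbol{\psi}\,dx$ is continuous and coercive on $\mathbf{V}(U)$ with coercivity constant $h_0$, so Lax--Milgram yields a unique $\mathbf{v}(t,\cdot)$ with $\|\mathbf{v}(t,\cdot)\|_{L^2(U)}\leq h_0^{-1}\|\mathbf{G}(t,\cdot)\|_{L^2(U)}$; integrating this bound in time gives the estimate $\|\mathbf{v}\|_{L^2((0,T)\times U)}\leq C$ asserted in \eqref{EUU}.

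Next I would verify that this $\mathbf{v}$ satisfies Hypotheses \ref{hyp} and \ref{hyp2}, so that the transport theory applies. By construction $\dive\mathbf{v}=0$, hence \eqref{con2} holds trivially; since $U$ is bounded, $\mathbf{v}\in L^2((0,T)\times U)$ gives \eqref{con1}, and choosing $R$ with $U\subset B_R$ makes \eqref{con3} vacuous. To match the whole-space framework of Theorems \ref{lemmaexis1} and \ref{uni}, I would extend $\mathbf{v}$ by zero outside $U$: because $\mathbf{v}\cdot\mathbf{n}=0$ on $\partial U$ there is no flux across the boundary, so for any $\phi\in C_c^\infty(\R^d)$ integration by parts gives $\int_{\R^d}\mathbf{v}\cdot\nabla\phi\,dx=0$, i.e. the extension remains divergence-free on $\R^d$ and is compactly supported, preserving all three conditions. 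The factor $\sigma$ multiplying the noise merely rescales the Brownian motion, equivalently replacing $\tfrac12\Delta$ by $\tfrac{\sigma^2}{2}\Delta$ in the It\^o form, and the proofs go through verbatim with this constant. Applying Theorem \ref{lemmaexis1} and Theorem \ref{uni} separately to each equation in \eqref{b}, and noting that the divergence form coincides with the transport form since $\dive\mathbf{v}=0$, then produces a unique quasiregular weak solution for $\rho$ and for $\nu$.

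It remains to check the invariance $(\rho,\nu)\in\mathcal{Z}$. By Remark \ref{REMAS} the solutions are $L^\infty(\Omega;\calY)$-limits of regularized solutions $\rho^\varepsilon,\nu^\varepsilon$, and the representation formula \eqref{repr formula} expresses $\rho^\varepsilon(t,\cdot)$ as the composition of the mollified initial datum with the bijective inverse flow $(\phi_t^\varepsilon)^{-1}$; since a diffeomorphism maps null sets to null sets, the essential range of $\rho^\varepsilon(t,\cdot)$ coincides with that of its initial datum. For $\varepsilon$ small the cut-off $\eta_\varepsilon$ equals $1$ on the bounded set $U$, so this range lies in $[\underline{M},\overline{M}]$ uniformly in $\varepsilon$. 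Being a closed convex constraint, the bound $\rho,\nu\in[\underline{M},\overline{M}]$ persists under the weak-$\star$ limit, whence $(\rho,\nu)\in\mathcal{Z}$.

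I expect the main obstacle to be precisely this bounded-domain adaptation: one must ensure that the tangency condition $\mathbf{v}\cdot\mathbf{n}=0$ genuinely confines the characteristics to $U$, so that the zero-extension argument is legitimate and the boundary plays no role in the transport equations, and that the drift satisfies Hypotheses \ref{hyp} and \ref{hyp2} with constants uniform in the fixed datum $(\overline{\rho},\overline{\nu})\in\mathcal{Z}$ --- the latter uniformity being what one will need for the Schauder fixed-point step. Everything else is either the by-now standard elliptic estimate for \eqref{a} or a direct invocation of the well-posedness theory already established.
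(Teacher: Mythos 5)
The paper offers no actual proof of Lemma \ref{PPA}: it merely asserts that \eqref{a} yields $\mathbf{v}\in L^{2}((0,T)\times U)$, that boundedness of $U$ together with $\dive\mathbf{v}=0$ gives Hypotheses \ref{hyp} and \ref{hyp2}, and that the theory of Sections \ref{EXISTENCE} and \ref{UNIQUE} then applies. Your proposal follows exactly this intended route and usefully fills in the two missing blocks: the Lax--Milgram resolution of the Darcy problem on $\mathbf{V}(U)$ (pressure eliminated by divergence-free test fields, coercivity constant $h_0$, yielding \eqref{EUU} with a constant depending only on the data), and the verification that the zero extension of $\mathbf{v}$ remains divergence-free on $\R^{d}$ so that \eqref{con1}--\eqref{con3} hold. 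Up to that point your argument is sound and matches the authors' sketch.

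The genuine gap sits exactly where you locate "the main obstacle", but your diagnosis of it is wrong: the tangency condition $\mathbf{v}\cdot\mathbf{n}=0$ does \emph{not} confine the stochastic characteristics to $U$. The characteristic SDE is $dX_t=\mathbf{v}(t,X_t)\,dt+\sigma\,dB_t$, and the additive Brownian term carries particles across $\partial U$ with positive probability whatever $\mathbf{v}$ does at the boundary --- this is precisely why the authors treat the bounded-domain transport equation separately in \cite{WNCOBD}. Two steps of your proof are affected. First, the restriction to $U$ of the whole-space solution built from the zero-extended drift does not automatically satisfy \eqref{DISTRHO}, whose integrals are over $U$ but whose test functions $\varphi\in C^{\infty}_c(\R^d)$ need not vanish near $\partial U$: passing from $\int_{\R^d}$ to $\int_U$ creates boundary flux terms driven by the noise, so either the definition must be read with $\varphi\in C^\infty_c(U)$ or these terms must be controlled. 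Second, the invariance argument for $\mathcal{Z}$ is incomplete: for $x\in U$ the value $\rho^{\varepsilon}(t,x)=\rho_0^{\varepsilon}\bigl((\phi_t^{\varepsilon})^{-1}(x)\bigr)$ may be read off from points \emph{outside} $U$, where $\rho_0$ has been extended (by zero, in your construction) and where the cut-off $\eta_\varepsilon$ is smaller than $1$; both effects can push values strictly below $\underline{M}$, so the essential range of $\rho^{\varepsilon}(t,\cdot)$ on $U$ need not lie in $[\underline{M},\overline{M}]$. This is repairable --- extend $\rho_0$ by a constant in $[\underline{M},\overline{M}]$ and drop the cut-off on the initial datum, which is not needed when $\rho_0\in L^2\cap L^\infty$ --- but as written the maximum-principle step fails. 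To be fair, the paper itself addresses none of this; if the lemma is to be proved rather than asserted, the behaviour of the noise at $\partial U$ is the step that requires a genuine new idea.
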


\bigskip 
One observes that, solving \eqref{a}--\eqref{b} 
we have constructed the operator 
\begin{equation*}
    P:\mathcal{Z}\rightarrow \mathcal{Z},\quad \mathbf{(}\rho ,\nu )=P(\overline{%
    \rho }\mathrm{,}\overline{\nu }),\;\quad \forall \,(\overline{\rho },%
    \overline{\nu })\in \mathcal{Z}.
\end{equation*}%
One then could use Schauder's theorem to find a fixed point of $P$, which will be a
weak solution of the system $\mathbf{SMP}$. To do so one has to show that 
$P(\mathcal{Z})$ is a relatively compact subset of the Banach space 
$L^{2}(\Omega \times [0,T] \times U)^{2}$, and also that the operator $P$ 
is continuous with respect to the norm $
\|(\cdot ,\cdot )\|_{L^{2}}$. Consequently, these are 
the two main steps to be done. 

\appendix
\section{Appendix}

\begin{definition}\label{def Ft}
Given a filtered probability space with an $\R^d$-valued Brownian motion defined on it, 
$( \Omega, \calF, P, \calF_t, B_t)$, for any $h \in L^2([0,T] ;\R^d)$, we can define the random process
$$
 F_t= \exp \Big( \int_0^t h(s) \cdot d B_s - \frac{1}{2} \int_0^t  |h(s)|^2 \, ds \Big) \, ,
$$
for $t\in[0,T]$. Such random processes are called stochastic exponentials.
\end{definition}

\medskip
We recall that stochastic exponentials satisfy the following SDE ( see \cite[proof of Theorem 4.3.3]{oksen} ) 
\begin{equation}\label{SDEexpon2}
F_t =   1 +  \int_{0}^{t}  h(s) F_s \, dB_s \, .
\end{equation}
This can be obtained applying It\^o formula to $F_t$. By Novikov's condition it also follows that any stochastic exponential $F_t$ is an $\calF_t$-martingale, and $\E[F_t] = 1$.

\medskip
When $t=T$, we shall use the short notation $F=F_T$ and, with a slight abuse of notation, 
still call the random variable $F$ a stochastic exponential. Let us recall the definition of the following space of random variables, which we call the space of stochastic exponentials:
$$ 
  \calX := \Big\{ F= \exp \Big( \int_0^T h(s) \cdot d B_s - \frac{1}{2} \int_0^T |h(s)|^2 \, ds \Big)   \    \Big|     \    h \in L^2 \big( [0,T] ;\R^d \big) \Big\} \, .
$$

\begin{remark}
Even though it is not really essential for our proof, we point out that for every $F \in \calX$ there exists a unique $h\in L^2(0,T)$ such that $F$ is the stochastic exponential of $h$. This can be easily shown using It\^o isometry.
\end{remark}

The following result, see \cite[Lemma 4.3.2]{oksen} or   \cite[Lemma 2.3]{Mao}, is a key fact for our analysis. Recall that $\calF=\calF_T$.

\begin{lemma}\label{expo} 
The span generate by $ \calX $ is a dense subset of $ L^{2}( \Omega)$.
\end{lemma}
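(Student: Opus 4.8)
The plan is to show that the orthogonal complement of $\mathrm{span}(\calX)$ in $L^2(\Omega)$ is trivial. Each $F \in \calX$ is square integrable (indeed $\E[F^2] = \exp(\int_0^T |h|^2\,ds) < \infty$), so it suffices to take $g \in L^2(\Omega)$ with $\E[gF] = 0$ for every $F \in \calX$ and prove $g = 0$ almost surely. First I would restrict attention to exponentials built from step functions: choosing $h = \sum_{k=1}^n \lambda_k \mathbbm{1}_{(t_{k-1},t_k]}$ with $0 = t_0 < t_1 < \cdots < t_n \le T$ and $\lambda_k \in \R^d$, the stochastic integral becomes a finite linear combination of the increments of $B$, equivalently $\sum_{k} \mu_k \cdot B_{t_k}$ as $\mu$ ranges over all of $(\R^d)^n$, while the deterministic factor $\exp(-\tfrac12 \int_0^T |h|^2\,ds)$ is a nonzero constant that can be discarded. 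Thus the hypothesis gives $\E\big[g\,\exp(\sum_{k} \mu_k \cdot B_{t_k})\big] = 0$ for every finite family of times and all $\mu_k \in \R^d$.

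Next, fixing such a family of times, I would set $\mathcal{G} = \sigma(B_{t_1}, \dots, B_{t_n})$, write $X = (B_{t_1}, \dots, B_{t_n})$ for the associated Gaussian vector in $\R^{nd}$ with density $p$, and let $\phi(X) = \E[g \mid \mathcal{G}]$. The orthogonality relation then reads $\int_{\R^{nd}} \phi(x)\,e^{\mu \cdot x}\,p(x)\,dx = 0$ for all $\mu \in \R^{nd}$. These integrals are all finite: by Cauchy--Schwarz and the contractivity of conditional expectation in $L^2$, $\E[|\phi(X)| e^{\mu \cdot X}] \le \E[g^2]^{1/2}\,\E[e^{2\mu \cdot X}]^{1/2}$, and the Gaussian moment generating function $\E[e^{2\mu\cdot X}]$ is finite for every $\mu$. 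Hence the bilateral Laplace transform of the finite signed measure $\phi\,p\,dx$ vanishes on all of $\R^{nd}$.

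The core step, and the one I expect to require the most care, is to deduce from this that $\phi\,p = 0$. I would argue by uniqueness of the Laplace transform: splitting $\phi = \phi^+ - \phi^-$, the two finite positive measures $\phi^\pm\, p\,dx$ have equal (finite) Laplace transforms on the open set $\R^{nd}$, hence coincide, so $\phi = 0$ $\,p$-a.e.; equivalently, one extends $\mu \mapsto \int \phi\, e^{\mu\cdot x}\, p\,dx$ to an entire function on $\C^{nd}$ (the Gaussian tails giving convergence for all complex arguments), notes it vanishes identically, and reads off the vanishing of the Fourier transform at $\mu = i\tau$. Either way, $\E[g \mid \sigma(B_{t_1}, \dots, B_{t_n})] = 0$ almost surely.

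Finally, I would let the times run over a countable dense subset $D \subset [0,T]$ containing $T$. Since Brownian paths are continuous, $\sigma(B_t : t \in D) = \calF_T$, so an increasing sequence of finite subsets of $D$ yields a filtration $\mathcal{G}_1 \subset \mathcal{G}_2 \subset \cdots$ with $\bigvee_n \mathcal{G}_n = \calF$. By L\'evy's upward martingale convergence theorem, $\E[g \mid \mathcal{G}_n] \to \E[g \mid \calF] = g$ almost surely and in $L^2$; as every term vanishes, $g = 0$. This gives $\mathrm{span}(\calX)^\perp = \{0\}$, i.e. $\mathrm{span}(\calX)$ is dense in $L^2(\Omega)$. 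The argument is stated for real scalars, the complex case following by treating the real and imaginary parts of $g$ separately.
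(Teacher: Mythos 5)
Your argument is correct, and it is essentially the classical proof of this fact which the paper does not reproduce but simply invokes by citation (to \cite[Lemma 4.3.2]{oksen} and \cite[Lemma 2.3]{Mao}): reduction to step-function exponentials, passage to the conditional expectation on $\sigma(B_{t_1},\dots,B_{t_n})$, vanishing of an everywhere-finite bilateral Laplace transform via analytic continuation, and L\'evy's upward martingale convergence theorem using that $\calF=\calF_T$ is the (augmented) Brownian $\sigma$-algebra. All the steps you flag as delicate (finiteness of the exponential moments, the identity-theorem argument in $\C^{nd}$, and the role of the Brownian filtration) are handled correctly.
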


We also have the following result.

\begin{lemma}\label{lemma B-F}
Let $F$ be a stochastic exponential and $Y_s\in L^2\big(\Omega\times [0,T]\big)$ an $\R^d$-valued, square integrable adapted process. Then,
\begin{equation}\label{B-F}
\E \Big[\int_{0}^{t}  Y_s \cdot d B_s \ F  \Big] =  \int_0^t h(s) \cdot \E \big[ Y_s  \ F   \big]  \, ds \, .
\end{equation}
\end{lemma}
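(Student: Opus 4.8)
The plan is to prove the identity \eqref{B-F} by exploiting the SDE \eqref{SDEexpon2} satisfied by the stochastic exponential together with the It\^o isometry. First I would recall that, writing $F=F_T$, the process $F_t$ solves $F_t = 1 + \int_0^t h(s) F_s \, dB_s$, so that $F_t$ is a martingale with $\E[F_t]=1$. The key observation is that, since $Y_s$ is adapted and square integrable, the It\^o integral $M_t := \int_0^t Y_s \cdot dB_s$ is itself a square integrable martingale, and the product $M_t F_t$ can be handled through the bracket between these two It\^o integrals.

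The cleanest route is to compute $\E[M_t F_t]$ directly via the It\^o product rule. Applying integration by parts (the It\^o product formula) to $M_t F_t$ gives
\begin{equation*}
d(M_t F_t) = M_t \, dF_t + F_t \, dM_t + d\langle M, F\rangle_t \, .
\end{equation*}
Here $dM_t = Y_t \cdot dB_t$ and $dF_t = h(t) F_t \cdot dB_t$, so the two stochastic differentials contribute martingale terms with zero expectation (one should check the integrability needed to guarantee the local martingales are true martingales; this follows from square integrability of $Y$ and the fact that all moments of $F_t$ are finite by Novikov's condition). The cross-variation term is $d\langle M, F\rangle_t = \big(Y_t \cdot h(t)\big) F_t \, dt$, since both drivers are the same Brownian motion $B$. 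Taking expectations and using that the martingale parts vanish yields
\begin{equation*}
\E[M_t F_t] = \int_0^t \E\big[ \big(Y_s \cdot h(s)\big) F_s \big] \, ds = \int_0^t h(s) \cdot \E\big[ Y_s F_s \big] \, ds \, .
\end{equation*}

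Finally I must reconcile $F_t$ with $F=F_T$, since the statement involves $F$ rather than $F_t$. Here I would use the martingale property together with the adaptedness of $Y_s$: conditioning on $\calF_s$ and using $\E[F \mid \calF_s] = F_s$ gives $\E[Y_s F] = \E[Y_s \E[F\mid\calF_s]] = \E[Y_s F_s]$ for each fixed $s\le t$, so that $\E[M_t F_t]$ and $\E[M_t F]$ agree after the time integration, and likewise the integrand $\E[Y_s F_s]$ may be replaced by $\E[Y_s F]$. I expect the main obstacle to be the careful justification of the integrability and the interchange of expectation with the time integral (via Fubini), rather than the algebra itself; the tower property handling the replacement of $F_s$ by $F$ is the conceptually delicate point, as it relies essentially on the martingale structure of the stochastic exponential established from \eqref{SDEexpon2}.
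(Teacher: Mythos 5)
Your argument is correct and is essentially the paper's own proof in a different guise: the paper substitutes the representation $F = 1 + \int_0^T h(s)F_s\cdot dB_s$ from \eqref{SDEexpon2} and applies the polarized It\^o isometry to the product of the two stochastic integrals, which is exactly the computation your It\^o product rule performs after the martingale terms are killed in expectation, and both proofs conclude with the same tower-property step $\E[Y_sF_s]=\E[Y_sF]$. The only (minor) difference is that your route requires a short localization argument to justify that $\int_0^\cdot M_s h(s)F_s\cdot dB_s$ is a true martingale, whereas the paper's direct use of the isometry only needs square integrability of $Y$ and of $h(\cdot)F_\cdot$, which is immediate.
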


\begin{proof}
Using the representation formula \eqref{SDEexpon2} we have
\begin{align*}
\E \Big[\int_{0}^{t}  Y_s \cdot d B_s \ F  \Big] &= \E \Big[\int_{0}^{t}  Y_s \cdot d B_s  \Big] +  \E \Big[\int_{0}^{t}  Y_s \cdot d B_s \ \int_0^T h(s) F_s \cdot dB_s  \Big] \\
&=  \E \Big[\int_{0}^{t}  Y_s \cdot h(s) F_s \, ds  \Big] \, .
\end{align*}
Since that $Y_s$ is $\mathcal{F}_{s}$-adapted, we obtain  
$$
\E \big[ Y_s   \, F_s  \big]  =  \E \big[ Y_s   \, F  \big] \, ,
$$
and \eqref{B-F} follows.
\end{proof}

\section*{Acknowledgements}

The author Ennio Fedrizzi is supported by the LABEX MILYON (ANR-10-LABX-0070) of Universit\'e de Lyon, within the program "Investissements d'Avenir" (ANR-11-IDEX-0007) operated by the French National Research Agency (ANR). 
The author Wladimir Neves is partially supported by
CNPq through the grants 
484529/2013-7, 308652/2013-4,
and also by FAPESP through the grant 2013/15795-9. 
Christian Olivera is partially supported by  
FAPESP 2012/18739-0 and   CNPq
through the grant 460713/2014-0.



\begin{thebibliography}{9999}

\bibitem{Aizem}
M. Aizenman, {\it On vector fields as generators of flows: a counterexample to Nelson's conjecture}
Ann. Math.,107, 287-296,  1978.

\bibitem{ABC} G. Alberti, S. Bianchini, and G. Crippa, {\it Divergence-free vector fields in $\R^2$},
Journal of Mathematical Sciences, 170,  3, 283-293, 2010.

\bibitem{ambrisio}
L.  Ambrosio, {\it Transport equation and
Cauchy problem for $BV$ vector fields}, Invent. Math.,  158,  227-260, 2004.

\bibitem{AttFl11} S. Attanasio and F. Flandoli, {\it Renormalized Solutions for Stochastic Transport Equations and the Regularization by Bilinear Multiplicative Noise}. Comm. in Partial Differential Equations, 36(8), 1455--1474, 2011.

\bibitem{Beck}
L. Beck, F. Flandoli, M. Gubinelli and M. Maurelli, {\it
Stochastic ODEs and stochastic linear PDEs with critical drift: regularity, duality and uniqueness }. Preprint available on Arxiv: 1401.1530,  2014.

\bibitem{CO13} P. Catuogno and C. Olivera, {\it $L^p$-solutions of the stochastic transport equation}. Random Operators and Stochastic Equations,  21(2), 125--134, 2013.

\bibitem{NCWN1} N. Chemetov and W. Neves, {\it The Generalized Buckley Leverett System: Solvability}.
Arch. for Rational Mechanics and Analysis, 208 (1), 1-24, 2013. 

\bibitem{NCWN2} N. Chemetov and W. Neves, 
{\it On a generalized Muskat-Brinkman type problem}. Interface and Free-Boundaries, 16, 339-357, 2014.

\bibitem{Consta}
P. Constantin, D. Cordoba, F. Gancedo and R. M. Strain, {\it  On the global existence for
the Muskat problem}. J. Eur. Math. Soc. (JEMS) 15, 201-227, 2013.

\bibitem{Consta2}
P. Constantin, D. Cordoba, F. Gancedo, L. Rodriguez-Piazza,  M. Strain, {\it  On the Muskat problem: global in time results in 2D and 3D}. Preprint available on Arxiv: 1310.0953,  2013. 

\bibitem{Dafermos}
	C.M. Dafermos, Hyperbolic conservation laws in continuum physics. Third edition. 
	Grundlehren der Mathematischen Wissenschaften [Fundamental Principles of Mathematical Sciences], 325. Springer-Verlag, 2010.

\bibitem{DL}
R. DiPerna and P.L. Lions, {\it Ordinary differential
equations, transport theory and Sobolev spaces}. Invent. Math.,  98, 
511-547, 1989.

\bibitem{Fre1}
 E. Fedrizzi and F. Flandoli, {\it Noise prevents singularities in linear transport equations}.  
 Journal of Functional Analysis, 264,  1329-1354, 2013.



\bibitem{Figalli}
 A. Figalli {\it Existence and uniqueness of martingale solutions for SDEs with rough or degenerate coefficients}.
Journal of Functional Analysis, 254, 1, 109-153, 2008. 


\bibitem{Flanlect}
F. Flandoli, Random perturbation of PDEs and fluid dynamic models. 
Lectures from the 40th Probability Summer School held in Saint-Flour, 2010.
 Lecture Notes in Mathematics, 2015. Springer, Heidelberg, 2011. 

\bibitem{FGP2}
 F. Flandoli, M. Gubinelli and E. Priola, {\it Well-posedness of the transport equation by stochastic
 perturbation}. Invent. Math., 180, 1-53, 2010.

\bibitem{MH}
M. Hauray, {\it On two-dimensional Hamiltonian transport equations with $L^p_\loc$ coefficients}. 
Ann. I. H. Poincar\'e (C), 20 (4), 625-644, 2003. 

\bibitem{Krylov}
N.V. Krylov and M.  R\"ockner, {\it  Strong solutions to stochastic equations with singular time dependent
drift}.  Probab. Theory Relat. Fields,  131, 154-196, 2005. 


\bibitem{Ku}
 H. Kunita, Stochastic flows and stochastic differential
equations. Cambridge University Press, 1990.

\bibitem{Ku3}
H. Kunita, {\it First order stochastic partial differential equations}.  In: Stochastic Analysis,
Katata Kyoto, North-Holland Math. Library, 32, 249-269, 1984.

\bibitem{LBL04} C. Le Bris and P.L. Lions, \emph{Renormalized solutions of some transport equations with partially $W^{1,1}$ velocities and applications}, Annali di Matematica pura ed applicata, 183, 97-130, 2004.

\bibitem{BrisLion}
C. Le Bris and P.L. Lions, {\it Existence and uniqueness of solutions
to Fokker-Planck type equations with irregular coefficients}. Comm.
Partial Differential Equations, 33, 1272-1317, 2008.

\bibitem{lion1}
P.L. Lions,  Mathematical topics in fluid mechanics, Vol. I: incompressible models. Oxford
Lecture Series in Mathematics and its applications, 3 (1996), Oxford University Press.

\bibitem{lion2}
P.L. Lions,  Mathematical topics in fluid mechanics, Vol. II: compressible models. Oxford
Lecture Series in Mathematics and its applications, 10 (1998), Oxford University Press.

\bibitem{lion98} P.L. Lions, {\it Sur les \'equations diff\'erentielles ordinaires et les \'equations de transport}. C.R. Acad. Sci. Paris, 326, pp. 833-838, 1998.


\bibitem{lions}
 P.L. Lions, P.  Benoit and P.E. Souganidis,   {\it Scalar conservation laws with rough (stochastic) fluxes  }. 
 Stochastic Partial Differential Equations: Analysis and Computations, 1 (4), 664-686, 2013. 
 
\bibitem{Mao}
X. Mao, {\it  Stochastic differential equations and applications, Horwood Publishing Chichester}.
UK, 1997, second edition 2007.

\bibitem{Maurelli}
M. Maurelli, {\it Wiener chaos and uniqueness for stochastic transport equation}. 
Comptes Rendus Math\'ematique,  349,  11-12,  669--672, 2011. 

\bibitem{MNP14} S.A. Mohammed, T.K. Nilssen, and F.N. Proske, 
{\it Sobolev Differentiable Stochastic Flows for SDE's with Singular Coefficients: Applications to the Transport Equation}. To appear in Annals of Probability.


\bibitem{MUSKAT} M. Muskat, \emph{Two fluid system in porous media. The
encroachment of water into oil sand}. Physics, 5, 250--264, 1934.

\bibitem{NO}
 W.  Neves and C. Olivera, {\it Wellposedness for stochastic continuity equations with 
 Ladyzhenskaya-Prodi-Serrin condition}. NoDEA. Nonlinear Differential Equations 
 and Applications, 22, 1--16, 2015.

\bibitem{WNCOBD} W.  Neves and C. Olivera, {\it Stochastic transport equation in bounded domains}. Preprint available on Arxiv: 1406.3735, 2014.

\bibitem{oksen}
B. \O ksendal, Stochastic Differential Equations: An Introduction With Applications.
Springer-Verlag, 2003.

\bibitem{Pardoux}
E. Pardoux, {\it  Equations aux d\'eriv\'ees partielles stochastiques non lin\'eaires monotones.
Etude de solutions fortes de type It\^o}.  PhD Thesis, Universite Paris Sud, 1975.

\bibitem{Temam} R. Temam, {\it {Navier-Stokes equations, Theory
and numerical analysis}}.  AMS Chelsea publishing, Providence, Rhode Island, 2001. 

\end{thebibliography}
\end{document}